\numberwithin{equation}{section}
\newtheorem{theorem}{Theorem}[section]
\newtheorem{lemma}[theorem]{Lemma}
\newtheorem{corollary}[theorem]{Corollary}
\theoremstyle{definition}
\theoremstyle{remark}
\newcommand{\bg}{{\bld g}}
\newcommand{\bLam}{{\bld \Lambda}}
\newcommand{\blam}{{\bld \lambda}}
\newcommand{\bxi}{{\bld \xi}}
\newcommand{\eps}{\varepsilon}
\newcommand{\bl}{\bigl\langle}
\newcommand{\br}{\bigr\rangle}
\newcommand{\dive}{{\ensuremath\mathop{\mathrm{div}\,}}}
\newcommand{\pol}{\EuScript{P}}
\newcommand{\bpol}{\boldsymbol{\pol}}
\newcommand{\bld}[1]{\boldsymbol{#1}}
\newcommand{\bv}{\bld{v}}
\newcommand{\bw}{\bld{w}}
\newcommand{\bn}{\bld{n}}
\newcommand{\bu}{\bld{u}}
\newcommand{\bU}{\bld{U}}
\newcommand{\bV}{\bld{V}}
\newcommand{\bq}{\bld{q}}
\newcommand{\bH}{\bld{H}}
\newcommand{\bQ}{\bld{Q}}
\newcommand{\bmu}{{\bld{\mu}}}
\newcommand{\bphi}{\bm \phi}
\newcommand{\Om}{\Omega}
\newcommand{\bfeta}{{\bm \eta}}
\newcommand{\bpsi}{\bm \psi}
\newcommand{\Dt}{\Delta t}
\newcommand{\dep}{{\bm \eta}}
\newcommand{\la}{\langle}
\newcommand{\ra}{\rangle}
\newcommand{\pdt}{\partial_{\Delta t}}
\newcommand{\pt}{\partial_t}
\newcommand{\de}{\delta}
\title[Robin-Robin coupling]{Fully discrete
  loosely coupled
  Robin-Robin scheme for incompressible fluid-structure interaction:
  stability and error analysis}
\author[]{Erik Burman}
\address{Department of Mathematics, University College London, London, UK–WC1E 6BT, United Kingdom}
\email{e.burman@ucl.ac.uk}
\author[]{Rebecca Durst}
\address{Division of Applied Mathematics,
Brown University,
182 George Street,
Providence, RI 02912, USA}
\email{rebecca\_durst@brown.edu}
\author[]{Miguel A. Fern\'andez}
\address{Inria Paris, 75012 Paris, France -- Sorbonne Universite \& CNRS, UMR 7598 LJLL, 75005 Paris, France 
}
\email{miguel.fernandez@inria.fr}
\author[]{Johnny Guzm\'an}
\address{Division of Applied Mathematics,
Brown University,
182 George Street,
Providence, RI 02912, USA}
\email{johnny\_guzman@brown.edu}
\begin{document}

\maketitle

\begin{abstract}
We consider a fully discrete loosely coupled scheme for incompressible fluid-structure
interaction based on the time semi-discrete splitting method introduced in 
{\emph{[Burman, Durst \& Guzm\'an, 
  arXiv:1911.06760]}}.
The splittling method uses a Robin-Robin type coupling that allows for
a segregated solution of the solid and the fluid systems, without
inner iterations. For the discretisation in
space we consider piecewise affine continuous finite elements for all
the fields and ensure the inf-sup condition by using a
Brezzi-Pitk\"aranta type pressure stabilization. The interfacial fluid-stresses  
are evaluated in a variationally consistent fashion, that is shown to admit 
an equivalent Lagrange multiplier formulation.  We prove that the method is
unconditionally stable and robust with respect to the amount of added-mass in the system. 
Furthermore, we provide an error estimate that shows the
error in the natural energy norm for the system is $\mathcal O\big(\sqrt{T}(\sqrt{\Delta t} + h)\big)$ where $T$ is the final time, 
$\Delta t$ the time-step length and $h$ the space discretization  parameter.
\end{abstract}

\section{Introduction}
The computational solution of fluid-structure interaction problems remains
a challenging problem. Indeed the combination of the continuity
requirement of velocities and stresses across the interface with
the incompressibility constraint leads to a very stiff problem.
In order to be able to use optimised solvers and existing codes for the fluid and the
solid sub-systems and to simplify the handling of geometric nonlinearities
it is appealing to use a loosely coupled (or explicit coupling) where the solid and fluid systems are solved sequentially,
passing information across the coupling interface at discrete time
levels, without iterating between the sub-systems within one time-step.
This partitioned solution procedure has been very successful in the context of
aeroelasticity (see \cite{FLT98}), but in other applications, depending on
the geometry of the computational domain or the physical parameters, it has been shown to suffer from severe stability problems (see \cite{causin2005added}).
In particular, in
applications where the fluid-solid density ratio is close to one any
naive decoupling of the fluid-solid system to form a loosely coupled
scheme is known to be unstable.

In this paper, we revisit the loosely coupled scheme based on a
Robin-Robin type coupling for the coupling of an incompressible fluid
with a thick-walled solid, introduced in
\cite[Algorihm 4]{burman2014explicit}. Recently (see \cite{BDG19}), this
method was analysed in the time semi-discrete framework,
i.e. independent of any space mesh parameter, and was shown to be
stable independent of the fluid-solid density ratio. In particular, the
dependence of the Robin-coefficient on the inverse of the space mesh
parameter (and the pressure stabilizer necessary for the stability arguments of \cite{burman2014explicit}) were eliminated. The splitting
error of the scheme could then be shown to be $\mathcal O(\sqrt{T}\sqrt{\Delta t})$. Note the absence of exponential growth of
perturbations in time (with respect to $T$). The success in \cite{BDG19} relies on the fact that,
at the continuous level,  one can use the Robin condition in strong
form, provided the solution of the time-discretised fluid 
system is sufficiently regular. Indeed, if $\alpha \in \mathbb{R}^+$
denotes the Robin parameter, $\bu$, $\bq$, $\sigma_f(\bu,p)$ and $\sigma_s(\bfeta)$ are the fluid and solid velocities and stresses at
some time-levels, $*$ superscripts indicate a shift by one time-step, and $\Sigma$ stands for the
fluid-solid interface, the Robin type coupling
condition in the fluid formally reads
\[
\sigma_f(\bu,p)n_f+ \alpha \bu= \alpha  \bq +
\sigma_f(\bu^*,p^*)n_f\quad \mbox{on}\quad \Sigma.
\]
A key ingredient in the stability analysis of the time semi-discretized method
is to write
\begin{equation}\label{eq:comp_Robin}
\alpha (\bu - \bq) = \sigma_f(\bu^*,p^*)n_f - \sigma_f(\bu,p)n_f.
\end{equation}
Assuming sufficient regularity, the discrepancy in the velocities across the interface can then be
replaced by the increment of the stresses, which is used to obtain stability. It should  however be noted that, at the
discrete level, the relation \eqref{eq:comp_Robin} does not hold true in general, if standard
finite elements are used for space discretization. Indeed, the stresses
will be discontinuous accross element boundaries and on polygonal
approximations of the boundary also $n_f$ will jump. It follows that 
the equality \eqref{eq:comp_Robin} can not be used and that a fully
discrete scheme based on the time-discrete approach of \cite{BDG19} has to
be carefully designed, with a discretization of the stresses that is
compatible with the loosely coupled scheme.

Drawing on ideas from \cite{le-tallec-mouro-01} (see also \cite[Algorithm 3]{burman2014explicit}) we consider a variational consistent representation 
of the interfacial fluid-stresses (i.e., as  the classical fluid variational residual involving a fluid-sided  lifting operator)
and show that the resulting scheme  can be recast as a Lagrange multiplier method. Matching the trace
spaces of the solid and fluid velocities, with that of the multiplier,
allows us to recover a relation similar to \eqref{eq:comp_Robin} in the fully
discrete framework. A complete a priori error analysis for the fully
discrete method, using piecewise affine approximation for all the
unknowns is then carried out resulting in an error estimate of
$\mathcal O \big(\sqrt{T} (\sqrt{\Delta t} + h)\big)$ in the natural norm. This shows
that our extension to the fully discrete case of the method proposed
in \cite{BDG19} is optimal, with no added conditions on the
discretization parameters or exponential growth of perturbations. To the best of our knowledge this is the first fully
discrete loosely coupled method for fluid-structure interaction
problems with thick-walled solids that allows for error estimates reflecting the splitting
error and the approximation order of the finite element space, without
any conditions on the physical or discretization parameters.
\subsection{Overview of previous work}
The source of instability occurring in loosely coupled methods was identified by Causin \emph{et al.} \cite{causin2005added} as
the so-called added-mass effect, see also \cite{le-tallec-mouro-01,FWR07}.
They also showed that the alternative, solving the interface coupling implicitly (strong coupling) and in a partitioned iterative fashion, on the
other hand is very costly in this regime, due to the stiffness of the coupling. 
A
first step in the direction of decoupling the two systems were the semi-implicit coupling schemes (see \cite{Fernandezetal2007,QuainiQuarteroni2007,BadiaQuainiQuarteroni2008b,ACF09,BukacCanicGlowMuhaQuaini2014}),
where the implicit part of the coupling, typically the elasticity
system and the fluid incompressibility (i.e., the added-mass), guarantees
stability, and the explicit step (transport in the fluid and geometrical non-linearities) reduces the
computational cost. Such splitting methods nevertheless retain an
implicit part, although of reduced size, and require a specific time-stepping in the fluid. Provably stable fully explicit coupling was first achieved
by Burman and Fern\'andez \cite{burman2009stabilization} using a formulation based on Nitsche's method,
drawing on an earlier, fully implicit formulation by Hansbo \emph{et al.}
\cite{Hansbo2005}. Stability was achieved by the addition of a temporal pressure stabilization that relaxed incompressibility in the vicinity
of the interface. Although the proposed scheme was proved to be
stable irrespectively of the added-mass effect, it suffered from a strong splitting error of order
$O(\Delta t/h)$  leading to a convergent scheme only for
$\Delta t = \mathcal O( h^{\alpha})$ with $\alpha>1$. The source of this
consistency error was the penalty term
of the Nitsche formulation. 
In a further development Burman and Fern\'andez compared the Nitsche
based method with a closely related scheme using a Robin type splitting
procedure \cite{burman2014explicit}. Robin type domain decomposition
had already been applied for the partitioned solution of strong coupling by Badia \emph{et
al.} \cite{badia2008fluid, NV08, GNV10} and Robin related explicit
coupling was proposed in \cite{BHS14}, but without theoretical justification. The loosely coupled scheme based on
Robin type coupling of \cite{burman2014explicit} was proved to be
stable, but with similar shortcomings as the Nitsche based
method. Since then several works \cite{fernandez2015generalized,BYZ15, BM16, FM16} have studied
the loosely coupled schemes for the interaction of
an incompressible fluid and a thick-walled solid. At best (see \cite{fernandez2015generalized,FM16}) their error
analysis results in 
estimates of order $O(\Delta t/\sqrt{h})$ under various (mild) conditions
on the discretization parameter. Observe that these latter references extend techniques
designed for the case of an incompressible fluid interacting with a
thin-walled solid (see \cite{Fern13,fernandez-mullaert-vidrascu-13}) to the case of the coupling with a thick-walled solid.   When preparing to submit the present work we came across a report
recently posted to arxiv by Seboldt and Buka\v c \cite{seboldt2020noniterative}, where a method using
Robin-conditions in a loosely coupled scheme
similar to the one introduced in [11] was analyzed. The main differences
in our work compared to theirs is that we use residual lifting, or Lagrange multipliers, for the interface
stresses and prove error estimates without conditions on the
discretization parameters and without exponential growth of the
stability constant in time. In their work on the other hand they derive a
stability estimate for the time semi-discretized problem where a
moving domain is accounted for and use arbitrary inf-sup stable finite
element spaces in the error analysis.

\subsection{Coupling of an incompressible fluid with a thin-walled solid}
Let us finally mention the case of an incompressible fluid coupled with
a thin-walled solid, i.e. a solid that is modeled on a domain of co-dimension
$1$ compared to the fluid system. This system is simpler and many
coupling schemes have been developed and analysed starting with the seminal work of 
Guidoboni \emph{et al.} \cite{GGCC09}, for instance \cite{Fern13,fernandez-mullaert-vidrascu-13,FLV15,BCM15,CMB15,OTB18}.

Since the
solid model is restricted to the $(d-1)$ dimensional interface domain
the solid velocities interact with the fluid everywhere in their
domain of definition. This means that there is no relaxation times
associated with propagation of waves in the direction perpendicular to
the interface. Therefore the stability of the solid system holds on the
$d-1$-dimensional interface and not in the $d$-dimensional bulk. 
The fundamental idea for stability is to implicitly integrate the solid inertial contributions 
within the fluid, through a Robin-type interface condition (which  
avoids the above mentioned added-mass issues) and  appropriately extrapolate 
the remaining solid contributions for accuracy (see, e.g., \cite{Fern13,fernandez-mullaert-vidrascu-13}).
Nevertheless,  when considering thick-walled solids, typically a trace inequality must be applied to
control interface quantities using the stability in the bulk domain (see, e.g., \cite{FM16}).
This leads to the need of control of higher derivatives, or a loss of a
negative power of the space mesh parameter. Therefore, when methods
used for the coupling with thin-walled solids are extended to the thick-walled solid case,  
 sub-optimal accuracy issues depending on the ratio of the time and space grid parameters result as in the examples in the previous section. 
 In other words, time splitting with thick-walled solids suffers from more severe accuracy issues 
 than in the thin-walled solid case.

\section{The linear fluid-solid interaction problem}

Let $\Omega_s$ and $\Omega_f$ be two  polygonal domains with a matching interface $\Sigma=\partial \Omega_s \cap \Omega_f$. For simplicity, we assume that the interface $\Sigma$ is a straight line. We also let $\Sigma_f= \Omega_f \backslash \Sigma$ and $\Sigma_s= \Omega_s \backslash \Sigma$. We consider the following coupled problem

\begin{equation}
\label{eq:fluid}
\left\{
\begin{aligned}
\rho_f \partial_t \bu-\dive \sigma_f(\bu,p)=&0 \quad && \text{ in } (0, T) \times \Omega_f, \\
\dive \bu = &0\quad && \text{ in } (0, T) \times \Omega_f, \\
\bu=&0  \quad && \text{ on }  (0, T) \times \Sigma_f,
\end{aligned}
\right.
\end{equation}

\begin{equation}
\label{eq:solid}
\left\{
\begin{aligned}
\rho_s \pt \bq- \dive \sigma_s(\bfeta)=&0 \quad && \text{ in } (0, T) \times \Omega_s, \\
\bq-\pt \bfeta=&0 \quad && \text{ in } (0, T) \times \Omega_s, \\
\bfeta=&0  \quad && \text{ on }  (0, T) \times \Sigma_s, \\
\end{aligned}
\right.
\end{equation}

\begin{equation}
\label{eq:coupling}
\left\{
\begin{aligned}
\bu=&\bq \quad  && \text{ on }  (0,T) \times \Sigma, \\
\sigma_{f}(\bu,p)\bn_f+  \sigma_{s}(\bfeta)  \bn_s= &0  \quad  &&  \text{ on } (0,T)  \times \Sigma,
\end{aligned}
\right.
\end{equation}
complemented with the following initial conditions: 
$$
\begin{aligned}
\bfeta(0, \cdot)=& \bfeta_0 \quad  && \text{ in } \Omega_s, \\
\bq(0,\cdot)=& \bq_0 \quad && \text{ in } \Omega_s, \\
\bu(0, \cdot)=& \bu_0 \quad  && \text{ in } \Omega_f.
\end{aligned}
$$
Here, $\bn_i$ is the outward pointing normal to $\partial \Omega_i$ for $i=s,f$. The stress tensors are given by
\begin{alignat*}{1}
\sigma_{f}(\bu,p) := & 2 \mu {\bf \epsilon} (\bfeta) - p {\bf I}, \\
\sigma_{s}(\bfeta) := & 2 L_{1} {\bf \epsilon}(\bfeta) + L_{2} (\dive \bfeta) {\bf I}.
\end{alignat*}
Here $\mu$ is the viscosity of the fluid and $L_1, L_2$ are the Lam\'e constants of the solid, with $L_1>0$ and $L_2 \ge 0$. The solid and fluid 
densities are denoted $\rho_s, \rho_f$, respectively.

Let us define the following spaces 
\begin{alignat*}{1}
\bV^s:=&\{ \bv \in \bH^1(\Omega_s): \bv =0 \text{ on } \Sigma_s \}, \\
\bV^f:=&\{ \bv \in \bH^1(\Omega_f): \bv =0\text{ on } \Sigma_f \}, \\
\bV^g:=&  L^2(\Sigma),\\
M^f:=& L_0^2(\Omega_f).
\end{alignat*}
We let 
\begin{equation*}
\blam:=\sigma_{f}(\bu,p)\bn_f.
\end{equation*}
Then if we assume that $\blam \in L^2(\Sigma)$ we have that the solution of \eqref{eq:fluid}-\eqref{eq:coupling} satisfies the weak formulation: For $t>0$, find $\bq(t), \bfeta(t) \in \bV^s$, $ u(t) \in \bV^f$, $ \blam(t) \in \bV^g$ , $p(t) \in M^f$ satisfying 
\begin{subequations}\label{weak}
\begin{alignat}{2}
\rho_s(\pt \bq, \bxi)_s+a_s( \bfeta, \bxi)+\la \blam, \bxi \ra=&0 \quad && \forall \bxi \in \bV^s \label{weak1}\\
(\bq, \bphi)_s-(\pt \bfeta, \bphi)_s=&0 \quad &&    \forall \bphi \in \bV^s \label{weak2} \\
\rho_f (\pt \bu, \bv)_f+a_f\big((\bu,p),(\bv,\theta)\big)  -\la \blam, \bv \ra=&0 \quad && \forall (\bv, \theta) \in \bV^f \times M^f  \label{weak3} \\
\la \bu-\bq, \bmu \ra=&0 \quad && \forall \bmu \in \bV^g. \label{weak4}
\end{alignat}
\end{subequations}
Here $(\cdot, \cdot)_i$ is the $L^2$ inner-product on $\Omega_i$, $i=s,f$. Also, $\la \cdot, \cdot \ra$ is the $L^2$ inner-product on $\Sigma$.   Finally, the bilinear form $a_f$ and $a_s$ are respectively given by  
$$
\begin{aligned}
a_f\big((\bu,p),(\bv,\theta)\big) := &2\mu (\eps(\bu), \eps(\bv))_f- (p, \dive \bv)_f + (\dive \bu, \theta),\\
a_s(\bfeta, \bxi):= &2L_1(\eps(\bfeta), \eps(\bxi))_s+ L_2 (\dive \bfeta, \dive \bxi)_s
\end{aligned}
$$
and the induced elastic energy norm 
\begin{equation*}
\|\bfeta\|_S^2:=a_s(\bfeta, \bfeta).
\end{equation*}

\section{Numerical method}

\subsection{Time discretization: Robin-based loosely coupled scheme}

We discretize the time interval $(0,T)$ with $N$ sub-intervals $(t_n, t_{n+1})$ where $t_n= \Dt n$,  $T=t_N$ and $\Delta t$ is the time-step length.  
We introduce the standard notation 
$$\pdt f^{n+1}:= \frac{1}{\Dt}(f^{n+1}-f^n),\quad f^{n+1/2}:=\frac{1}{2}(f^{n+1}+f^{n}).$$ 
As mentioned in the introduction, a splitting method was introduced in \cite{BDG19} using a Robin-based procedure that solves two PDEs, sequentially, in each time step. Here we further discretize that method by applying a backward Euler method in the fluid and a mid-point scheme  in the solid. 
This yields the  time semi-discrete solution procedure  reported in Algorithm~\ref{alg:semi}, where 
$\alpha>0$ denotes the so-called  Robin parameter (user defined). 
\begin{algorithm} [h!]
\noindent 
\begin{enumerate}
\item Solid subproblem:
\begin{equation}
\label{eq:solid-R}
\left\{
\begin{aligned}
\rho_s \pdt \bq^{n+1}- \dive \sigma_s(\bfeta^{n+\frac12})=&0 \quad && \text{ in }  \Omega_s, \\
\pdt \bfeta^n=&\bq^{n+\frac12} \quad && \text{ in }   \Omega_s, \\
\bfeta^{n+1}=&0  \quad && \text{ on }   \Sigma_s, \\
\sigma_s(\bfeta^{n+\frac12})n_s + \alpha \bq^{n+\frac12} =& \alpha \bu^n
- \sigma_f(\bu^n,p^n)n_f\quad && \text{ on }   \Sigma.
\end{aligned}
\right.
\end{equation}
\item Fluid subproblem:
\begin{equation}
\label{eq:fluid-R}
\left\{
\begin{aligned}
\rho_f \pdt \bu^{n+1}-\dive \sigma_f(\bu^{n+1},p^{n+1})=&0 \quad && \text{ in }  \Omega_f, \\
\dive \bu^{n+1} = &0\quad && \text{ in }  \Omega_f, \\
\bu^{n+1}=&0  \quad && \text{ on }   \Sigma_f,\\
\sigma_f(\bu^{n+1},p^{n+1})n_f+ \alpha \bu^{n+1}=& \alpha  \bq^{n+\frac12} +
\sigma_f(\bu^n,p^n)n_f\quad && \text{ on }   \Sigma.
\end{aligned}
\right.
\end{equation}
\end{enumerate}
\caption{Time semi-discrete, Robin-based, loosely coupled scheme (from \cite{BDG19}).}
\label{alg:semi}
\end{algorithm}
Note that Algorithm~\ref{alg:semi} is nothing but the generalization of the genuine Robin-Robin explicit coupling scheme 
introduced in \cite[Algorithm 4]{burman2014explicit} to the case of a general Robin coefficient $\alpha>0$  
(i.e., the traditional Nitsche penalty parameter $\gamma \mu/h$ is replaced by $\alpha$). 

Unconditional energy stability and  sub-optimal $\mathcal O(\sqrt{\Delta t})$ accuracy are derived in 
 \cite{BDG19} for the PDE version of Algorithm~\ref{alg:semi}, irrespectively of the value of $\alpha>0$. 
The relation \eqref{eq:fluid-R}$_4$ plays a fundamental role in the analysis of the method. 
The next section provides a fully discrete version of Algorithm~\ref{alg:semi} using a conforming 
finite element approximation in space. 

\subsection{Finite element approximation with fitted meshes}

We assume that $\mathcal{T}_h^i$ is a simplicial triangulation of $\Omega_i$ where $i=s,f$. We assume the meshes are quasi-uniform and shape-regular \cite{brenner2007mathematical}. Furthermore, we assume that the meshes match on the interface $\Sigma$. We define the following finite element spaces:
\begin{alignat*}{1}
\bV_h^s:=&\{ \bv \in \bV^s: \bv|_K \in  \bpol^1(K), \forall K \in \mathcal{T}_h^s \}, \\
\bV_h^f:=&\{ \bv \in \bV^f:  \bv|_K \in \bpol^1(K), \forall K \in \mathcal{T}_h^f \}, \\
\bV_h^g:=& \{ \text{ trace space of } \bV_h^f \text{ on } \Sigma \}, \\ 
M_h^f:=& \{ v \in M^f:  v|_K \in \pol^1(K), \forall K \in \mathcal{T}_h^f \}.
\end{alignat*}
Here $\pol^1(K)$ is the space of linear functions defined on $K$ and $\bpol^1(K)=[\pol^1(K)]^2$. 
Note that, owing to the mesh conformity, we have 
\begin{equation}\label{eq:traceS}
\mbox{trace}_{\vert\Sigma }\bV_h^s = \mbox{trace}_{\vert \Sigma} \bV_h^f = \bV_h^g.
\end{equation}
In order to circumvent the lack of inf-sup stability of the pair $\bV_h^f/M_h^f$, we consider the following pressure stabilized discrete bilinear 
form for the fluid (see, e.g., \cite{brezzi-pitkaranta-84}):
$$
a_{f,h}\big((\bu_h,p_h),(\bv_h,\theta_h)\big) :=  
a_{f}\big((\bu_h,p_h),(\bv_h,\theta_h)\big) + h^2 (\nabla p_h, \nabla \theta_h)_f.
$$
At last, we introduce the standard fluid-sided discrete lifting 
operator $\mathcal L_h: \bV_h^g \rightarrow \bV_h^f $, such that, 
the nodal values of ${\mathcal L}_h\bmu_h$ vanish out of $\Sigma$  and $ (\mathcal L_h \bmu_h) \vert_\Sigma = \bmu_h $, 
 for all $\bmu_h \in \bV_h^g$. 

\begin{algorithm} [h!]
\noindent 
\begin{enumerate}
\item Solid subproblem:
Find $\bq_h^{n+1}, \bfeta_h^{n+1} \in \bV_h^s$ such that $ \bq_h^{n+1/2} =  \pdt \bfeta_h^{n+1} $ and 
\begin{equation}  \label{solid1-R}
\rho_s(\pdt \bq_h^{n+1}, \bxi_h)_s+a_s( \bfeta_h^{n+1/2}, \bxi_h)+\alpha \la (\bq_h^{n+1/2} - \bu_h^n), \bxi_h\ra+ \la \blam_h^n, \bxi_h \ra=0 \quad  \forall \bxi_h \in \bV_h^s.
\end{equation}  
\item Fluid subproblem: Find $\bu_h^{n+1} \in \bV_h^f , p_h \in M_h^f $ such that
\begin{multline}\label{fluid-fd}
\rho_f (\pdt \bu_h^{n+1}, \bv_h)_f+a_{f,h}\big( (\bu_h^{n+1} ,p_h^{n+1}), (\bv_h,\theta_h)\big) \\
+\alpha \la \bu_h^{n+1}-\bq_h^{n+1/2} , \bv_h\ra -  \la  \blam_h^n,\bv_h\ra 
=0 \quad\forall (\bv_h, \theta_h) \in \bV_h^f \times M_h^f .
\end{multline}
\item Energy-preserving fluid-stress evaluation: Find $\blam_h^{n+1} \in \bV_h^g$ such that 
\begin{equation}\label{eq:stress}
\la \blam_h^{n+1}, \bmu_h\ra =  
 \rho_f (\pdt \bu_h^{n+1},{\mathcal L}_h\bmu_h)_f + a_{f,h}\big( (\bu_h^{n+1} ,p_h^{n+1}), ({\mathcal L}_h\bmu_h, 0 )\big)
\quad \forall \bmu_h \in \bV_h^g.
\end{equation}
\end{enumerate}
\caption{Fully discrete, Robin-based, loosely coupled scheme.}
\label{alg:fullRR}
\end{algorithm}

The proposed finite element approximation of Algorithm~\ref{alg:semi} is reported in 
Algorithm~\ref{alg:fullRR}. It should be noted that the interfacial fluid stress reconstruction given by step (3)
has been introduced for purely analysis purposes (see discussion below) and it should be omitted in any computer implementation. 
Indeed, there is no specific need of evaluating the Lagrange multiplier $\blam_h^{n+1}$ as an additional unknown, since the 
right-hand side of \eqref{eq:stress} can be inserted directly in \eqref{solid1-R} and \eqref{fluid-fd}.

Step (1) of Algorithm~\ref{alg:fullRR} can be reformulated as: 
Find $\bq_h^{n+1}, \bfeta_h^{n+1} \in \bV_h^s$ such that
\begin{subequations}\label{solid}
\begin{alignat}{2}
\rho_s(\pdt \bq_h^{n+1}, \bxi_h)_s+a_s( \bfeta_h^{n+1/2}, \bxi_h)+\alpha \la (\pdt \bfeta_h^{n+1}- \bu_h^n), \bxi_h\ra+ \la \blam_h^n, \bxi_h \ra=&0 \quad && \forall \bxi_h \in \bV_h^s, \label{solid1}\\
(\bq_h^{n+1/2}, \bphi_h)_s-(\pdt \bfeta_h^{n+1}, \bphi_h)_s=&0 \quad &&    \forall \bphi_h \in \bV_h^s. \label{solid2}
\end{alignat}
\end{subequations}
Moreover, from \eqref{fluid-fd} and \eqref{eq:stress}, we have that 
\begin{equation}\label{eq:stress2}
\la \blam_h^{n+1}, \bmu_h\ra =  \alpha \la \bq_h^{n+1/2}  -\bu_h^{n+1}, \bmu_h\ra +  \la  \blam_h^n,\bmu_h\ra 
\end{equation}
for all $\bmu_h \in \bV_h^g$.  As a result, steps (2) and (3) of Algorithm~\ref{alg:fullRR} can be reformulated as:
Find $\bu_h^{n+1} \in \bV_h^f , p_h \in M_h^f, \blam_h^{n+1} \in \bV_h^g$ such that
\begin{subequations}\label{fluid}
\begin{alignat}{2}
\rho_f (\pdt \bu_h^{n+1}, \bv_h)_f+2\mu (\eps(\bu_h^{n+1}), \eps( \bv_h))_f- (p_h^{n+1}, \dive \bv_h)_f -\la \blam_h^{n+1}, \bv_h \ra=&0 \quad && \forall \bv_h \in \bV_h^f \label{fluid1}\\
(\dive \bu_h^{n+1}, \theta_h)_f+ h^2 (\nabla p_h^{n+1}, \nabla \theta_h)_f=&0 \quad && \forall \theta \in M_h^f \label{fluid1.5} \\
\alpha \la \bu_h^{n+1}-\bq_h^{n+1/2} , \bmu_h \ra+ \la \blam_h^{n+1}-\blam_h^n, \mu \ra=&0 \quad && \forall \bmu_h \in \bV_h^g. \label{fluid2}
\end{alignat}
\end{subequations}

Finally, it is also worth noting that, owing to \eqref{eq:traceS} and \eqref{eq:stress2}, we have 
\begin{equation}\label{aux101}
\blam_h^{n+1}= \alpha(\bq_h^{n+1/2} -  \bu_h^{n+1}) + \blam_h^n \quad \text{ on } \Sigma 
\end{equation}
for $n\geq 1$. This relation, which represents discrete counterpart of 
\eqref{eq:fluid-R}$_4$, is a  fundamental ingredient of the stability analysis reported in next section.  Also, for convenience we write the relationship:
\begin{equation}\label{aux111}
\bq_h^{n+1/2}=\pdt \bfeta_h^{n+1} \quad \text{ on } \Omega_s.
\end{equation}

\subsection{Stability}
Next, we will prove stability of the method. The following identity is crucial.
\begin{equation}\label{eq131}
\bl \bv-\bw,  \bpsi\br= \frac{1}{2} \left(\|\bv\|_{L^2(\Sigma)}^2-\|\bw\|_{L^2(\Sigma)}^2+   \|\bpsi-\bw\|_{L^2(\Sigma)}^2-\|\bpsi-\bv\|_{L^2(\Sigma)}^2\right).
\end{equation}

The following quantities will allow us to state the stability result,
\begin{alignat*}{1}
\mathcal{S}_h^{n}=&\|\bfeta_h^{n}\|_{S}^2 +\rho_s\|\bq_h^{n}\|_{L^2(\Omega_s)}^2+ \rho_f \|\bu_h^{n}\|_{L^2(\Omega_f)}^2+ \Dt(\alpha \|\bu_h^n\|_{L^2(\Sigma)}^2+ \frac{1}{\alpha}   \|\blam_h^{n}\|_{L^2(\Sigma)}^2), \\
\mathcal{Z}_h^n=& \rho_f \|\bu_h^{n}-\bu_h^{n-1}\|_{L^2(\Om_f)}^2  +2\alpha \Dt \|\partial_{\Dt} \bfeta_h^{n}-\bu_h^{n-1}\|_{L^2(\Sigma)}^2 + 4 \mu \Dt  \|\eps(\bu_h^{n+1})\|_{L^2(\Omega_f)}^2 + 2h^2\Dt \|\nabla p_h^{n}\|_{L^2(\Omega_s)}^2.
\end{alignat*}

\begin{lemma}\label{lem:stab}
Let $\{ (\bq_h^{n+1}, \bfeta_h^{n+1},\bu_h^{n+1}, p_h^{n+1},\blam_h^{n+1} \}_{n=0}^{N-1} \subset   \bV_h^s\times \bV_h^s \times  \bV_h^f  \times M_h^f  \times \bV_h^g$ 
be given by Algorithm~\ref{alg:fullRR}. The following energy identity holds:
\begin{equation*}
\mathcal{S}_h^{M} +\sum_{m=1}^M  \mathcal{Z}_h^m=\mathcal{S}_h^{0} \qquad \text{ for }  1 \le M  \le N.
\end{equation*}
\end{lemma}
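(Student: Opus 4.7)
The plan is to derive an energy identity for a single time step of the form $\mathcal S_h^{n+1} - \mathcal S_h^n + (\text{dissipation})_{n+1}=0$ and then telescope the sum from $n=0$ to $M-1$. The crux will be a careful accounting of the interface contributions using the identity \eqref{eq131} together with the discrete Robin relation \eqref{aux101}, which is precisely where the energy-preserving choice of the fluid-stress evaluation in step~(3) of Algorithm~\ref{alg:fullRR} pays off.

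\emph{Solid side.} I would test \eqref{solid1-R} with $\bxi_h=2\Dt\,\bq_h^{n+1/2}$, using $\bq_h^{n+1/2}=\pdt\bfeta_h^{n+1}$. The midpoint structure gives two clean telescopings (with no artificial dissipation):
\begin{equation*}
2\Dt\rho_s(\pdt\bq_h^{n+1},\bq_h^{n+1/2})_s=\rho_s\bigl(\|\bq_h^{n+1}\|^2-\|\bq_h^n\|^2\bigr),\qquad
2\Dt\,a_s(\bfeta_h^{n+1/2},\pdt\bfeta_h^{n+1})=\|\bfeta_h^{n+1}\|_S^2-\|\bfeta_h^n\|_S^2,
\end{equation*}
leaving the interface contribution $2\Dt\alpha\la\bq_h^{n+1/2}-\bu_h^n,\bq_h^{n+1/2}\ra+2\Dt\la\blam_h^n,\bq_h^{n+1/2}\ra$.

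\emph{Fluid side.} I would test \eqref{fluid1}--\eqref{fluid1.5} with $(\bv_h,\theta_h)=(2\Dt\,\bu_h^{n+1},2\Dt\,p_h^{n+1})$. The backward Euler choice gives the telescoping $\rho_f(\|\bu_h^{n+1}\|^2-\|\bu_h^n\|^2+\|\bu_h^{n+1}-\bu_h^n\|^2)$ plus the bulk dissipation $4\mu\Dt\|\eps(\bu_h^{n+1})\|^2+2h^2\Dt\|\nabla p_h^{n+1}\|^2$, with the interface term $-2\Dt\la\blam_h^{n+1},\bu_h^{n+1}\ra$. Now invoking \eqref{aux101}, I rewrite this as $-2\Dt\la\blam_h^n,\bu_h^{n+1}\ra-2\Dt\alpha\la\bq_h^{n+1/2}-\bu_h^{n+1},\bu_h^{n+1}\ra$.

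\emph{Combining and handling the interface.} Adding the two identities, the $\blam_h^n$-contributions collapse to $2\Dt\la\blam_h^n,\bq_h^{n+1/2}-\bu_h^{n+1}\ra=\tfrac{2\Dt}{\alpha}\la\blam_h^n,\blam_h^{n+1}-\blam_h^n\ra$, where I have used \eqref{aux101} once more. The elementary polarization $2\la a,b-a\ra=\|b\|^2-\|a\|^2-\|b-a\|^2$ turns this into $\tfrac{\Dt}{\alpha}(\|\blam_h^{n+1}\|^2-\|\blam_h^n\|^2)-\Dt\alpha\|\bq_h^{n+1/2}-\bu_h^{n+1}\|^2$. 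For the remaining $\alpha$-terms I apply \eqref{eq131} twice: once with $(\bv,\bw,\bpsi)=(\bq_h^{n+1/2},\bu_h^n,\bq_h^{n+1/2})$ and once with $(\bv,\bw,\bpsi)=(\bq_h^{n+1/2},\bu_h^{n+1},\bu_h^{n+1})$. Their difference produces the telescoping $\Dt\alpha(\|\bu_h^{n+1}\|^2-\|\bu_h^n\|^2)$ together with the two nonnegative penalties $\Dt\alpha\|\bq_h^{n+1/2}-\bu_h^n\|^2$ and $\Dt\alpha\|\bq_h^{n+1/2}-\bu_h^{n+1}\|^2$; the latter is exactly cancelled by the negative term produced one step earlier.

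\emph{Conclusion.} What remains is $\mathcal S_h^{n+1}-\mathcal S_h^n+\mathcal Z_h^{n+1}=0$, and summing $n=0,\dots,M-1$ telescopes to the stated identity. The main obstacle in the argument is this delicate bookkeeping in the interface terms: one must use \eqref{aux101} in two different ways, so that the quadratic term $\|\bq_h^{n+1/2}-\bu_h^{n+1}\|^2$ coming out of the $\blam_h$-polarization is precisely absorbed by the one produced by \eqref{eq131}. This cancellation, made possible by matching the trace space of $\blam_h$ with those of the velocities as in \eqref{eq:traceS}, is what keeps the identity dissipative (all remaining boundary contributions are nonnegative) and hence yields unconditional stability, independent of $\alpha$ and of the added-mass parameters.
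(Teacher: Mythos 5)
Your proof is correct and follows essentially the same route as the paper's: testing with $\bq_h^{n+1/2}$ and $(\bu_h^{n+1},p_h^{n+1})$, using \eqref{aux101} to trade the interface velocity mismatch for the multiplier increment, applying the polarization identity \eqref{eq131}, and telescoping, with the only difference being organizational (the paper collects all interface contributions into a single quantity $J$ before manipulating it). Your cancellation of $\Dt\alpha\|\bq_h^{n+1/2}-\bu_h^{n+1}\|_{L^2(\Sigma)}^2$ is exactly the paper's cancellation of $\frac{1}{2\alpha}\|\blam_h^{n+1}-\blam_h^{n}\|_{L^2(\Sigma)}^2$ against $\frac{1}{\alpha}\|\blam_h^{n+1}-\blam_h^{n}\|_{L^2(\Sigma)}^2$, and the per-step identity you obtain agrees with what the computation actually yields (your coefficient $\alpha\Dt$ on the interface penalty indicates that the factor $2\alpha\Dt$ in the paper's stated definition of $\mathcal{Z}_h^{n}$ is a harmless factor-of-two slip).
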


\begin{proof}
We let  $\bxi_h=\bq_h^{n+1/2}$ in \eqref{solid1} and using \eqref{aux111} we get
\begin{alignat*}{1}
\frac{1}{2}\|\bfeta_h^{n+1}\|_{S}^2+ \frac{\rho_s}{2}\|\bq_h^{n+1}\|_{L^2(\Omega_s)}^2=& \frac{1}{2} \| \bfeta_h^{n}\|_{S}^2+\frac{\rho_s}{2}  \|\bq_h^{n}\|_{L^2(\Omega_s)}^2.\\
&+\alpha \Dt \bl (\bu_h^{n} -\partial_{\Dt} \bfeta_h^{n+1}), \partial_{\Dt} \bfeta_h^{n+1} \ra  -\Dt \la \blam_h^{n}, \pdt \bfeta_h^{n+1} \ra .
\end{alignat*}
If we now set  $\bv_h=\bu_h^{n+1}$ in  \eqref{fluid1}  and $\theta_h=p_h^{n+1}$ in \eqref{fluid1.5} we obtain
\begin{alignat*}{1}
&\frac{\rho_f}{2} \|\bu_h^{n+1}\|_{L^2(\Omega_f)}^2+\frac{\rho_f}{2} \|\bu_h^{n+1}-\bu_h^{n}\|_{L^2(\Omega_f)}^2+ 2 \mu \Dt  \|\eps(\bu_h^{n+1})\|_{L^2(\Omega_f)}^2 +h^2\Dt \|\nabla p_h^{n+1}\|_{L^2(\Omega_f)}^2 \\
& =\frac{\rho_f}{2} \|\bu_h^{n}\|_{L^2(\Omega_f)}^2+ \Dt \la \blam_h^{n+1}, \bu_h^{n+1}\ra.
\end{alignat*}
Adding the above two equations we get
\begin{alignat*}{1}
&\frac{1}{2}\|\bfeta_h^{n+1}\|_{S}^2+ \frac{\rho_s}{2}\|\bq_h^{n+1}\|_{L^2(\Omega_s)}^2+ \frac{\rho_f}{2} \|\bu_h^{n+1}\|_{L^2(\Omega_f)}^2\\
&+\frac{\rho_f}{2} \|\bu_h^{n+1}-\bu_h^{n}\|_{L^2(\Omega_f)}^2+ 2 \mu \Dt  \|\eps(\bu_h^{n+1})\|_{L^2(\Omega_f)}^2 +h^2\Dt \|\nabla p_h^{n+1}\|_{L^2(\Omega_f)}^2 \\
&=\frac{1}{2}\|\bfeta_h^{n}\|_{S}^2+ \frac{\rho_s}{2}\|\bq_h^{n}\|_{L^2(\Omega_s)}^2+ \frac{\rho_f}{2} \|\bu_h^{n}\|_{L^2(\Omega_f)}^2  + \Dt J
\end{alignat*}
where 
\begin{alignat*}{1}
J:=& \alpha \bl (\bu_h^{n} -\partial_{\Dt} \bfeta_h^{n+1}), \partial_{\Dt} \bfeta_h^{n+1} \br-\bl \blam_h^{ n}, \partial_{\Dt} \bfeta_h^{n+1}  \br \\ 
&+\bl  \blam_h^{n+1}, \bu_h^{n+1} \br 
\end{alignat*}
After some manipulations and using \eqref{aux101} we obtain
\begin{alignat*}{1}
J=& \alpha \bl (\bu_h^{n} - \bu_h^{n+1} + \bu_h^{n+1} -\partial_{\Dt} \bfeta_h^{n+1}), \partial_{\Dt} \bfeta_h^{n+1} \br-\bl \blam_h^{ n}, \partial_{\Dt} \bfeta_h^{n+1} - \bu_h^{n+1} + \bu_h^{n+1}  \br \\ 
&+\bl  \blam_h^{n+1}, \bu_h^{n+1} \br \\
=&  \alpha \bl \bu_h^{n} - \bu_h^{n+1}, \partial_{\Dt} \bfeta_h^{n+1} \br + \alpha \bl  \bu_h^{n+1} -\partial_{\Dt} \bfeta_h^{n+1}, \partial_{\Dt} \bfeta_h^{n+1} \br \\
& - \bl \blam_h^{ n}, \partial_{\Dt} \bfeta_h^{n+1} - \bu_h^{n+1} \br + \bl \blam_h^{n+1} - \blam_h^{n}, \bu_h^{n+1} \br \\
=& \alpha \bl \bu_h^{n} - \bu_h^{n+1}, \partial_{\Dt} \bfeta_h^{n+1} \br -\frac{1}{\alpha} \bl \blam_h^{ n}, \blam_h^{n+1} - \blam_h^{n} \br \\
&  + \bl \blam_h^{n+1} - \blam_h^{n}, \bu_h^{n+1} - \partial_{\Dt} \bfeta_h^{n+1} \br.
\end{alignat*}
With \eqref{eq131}, we have  
\begin{alignat*}{1}
 \alpha \bl \bu_h^{n} - \bu_h^{n+1}, \partial_{\Dt} \bfeta_h^{n+1} \br =& \frac{\alpha}{2} \bigg(\|\bu_h^{n}\|_{L^2(\Sigma)}^2-\|\bu_h^{n+1}\|_{L^2(\Sigma)}^2\\
& - \|\partial_{\Dt} \bfeta_h^{n+1}-\bu_h^n\|_{L^2(\Sigma)}^2  + \|\partial_{\Dt} \bfeta_h^{n+1}-\bu_h^{n+1}\|_{L^2(\Sigma)}^2 \bigg), \\
\frac{1}{\alpha} \bl \blam_h^{ n}, \blam_h^{n+1} - \blam_h^{n} \br =& \frac{1}{2\alpha}\bigg(\|\blam_h^{n}\|_{L^{2}(\Sigma)}^2- \|\blam_h^{n+1}\|_{L^{2}(\Sigma)}^2 + \|\blam_h^{n+1} -  \blam_h^{ n} \|_{L^{2}(\Sigma)}^2 \bigg).
\end{alignat*}

Using \eqref{aux101}, we note that $ \|\blam_h^{n+1} -  \blam_h^{ n} \|_{L^{2}(\Sigma)}^2 = \alpha^2 \|\partial_{\Dt} \bfeta_h^{n+1}-\bu_h^{n+1}\|_{L^2(\Sigma)}^2$. Thus, we conclude

\begin{alignat*}{1}
J  =&\frac{\alpha}{2} \left(\|\bu_h^{n}\|_{L^2(\Sigma)}^2-\|\bu_h^{n+1}\|_{L^2(\Sigma)}^2\right) + \frac{1}{2\alpha} \left(\|\blam_h^{n}\|_{L^{2}(\Sigma)}^2- \|\blam_h^{n+1}\|_{L^{2}(\Sigma)}^2\right) -\frac{\alpha}{2}\|\partial_{\Dt} \bfeta_h^{n+1}-\bu_h^n\|_{L^2(\Sigma)}^2.
\end{alignat*}

We finally arrive at
\begin{alignat*}{1}
\frac{1}{2} \mathcal{S}_h^{n+1}+\frac{1}{2}\mathcal{Z}_h^{n+1}=& \frac{1}{2}\mathcal{S}_h^{n}.
\end{alignat*}
The result now follows after summing both sides. 
\end{proof}

\section{Error Analysis}
\subsection{The linear interpolant and the $L^2$- projection}
In order to carry out the error analysis we define the following discrete errors: 
\begin{alignat*}{2}
\bH_h^n:= &\bfeta_h^n -R_h^s \bfeta^n , \quad  &&\bQ_h^n:= \bq_h^n -R_h^s q^n, \\
\bU_h^n:=&\bu_h^n -R_h^f (\bu^n), \quad && \bLam_h^n:= \blam_h^n -\mathbb{P}_h \blam^n, \\
P_h^n:=& p_h^n -S_h (p^n),
\end{alignat*}
where $R_h^i$, $i=s,f$ are the Scott-Zhang interpolants defined in \cite{scott1990finite} projecting onto our finite element spaces $V_h^s$, $V_h^f$. Also,  $S_h$ is the Scott-Zhang interpolant into $M_h^f$ modified by a global constant so the average on $\Omega_f$ is zero. Finally, $\mathbb{P}_h$ is the $L^2$ projection onto $V_h^g$. Thus,
\begin{equation}\label{L2}
\bl \mathbb{P}_h \blam^n, \bmu_h \br = \bl \blam^n,\bmu_h \br, \quad \forall \bmu \in V_h^g.
\end{equation}
There is flexiblity in defining the Scott-Zhang interpolant and we choose the degrees of freedom on the boundary so that, $R_h^s \bv =R_h^f \bw$ on $\Sigma$ if $\bv \in [H^1(\Omega_s)]^2$ and $\bw \in [H^1(\Omega_f)]^2$ and  $\bv=\bw$ on $\Sigma$ . For these interpolants, we have the well known stability result for $\bv \in [H^1(\Omega_i)]^2$, $i=s,f$, and $r \in H^1(\Omega_f)$,
\begin{equation}\label{Stability}
\|R_h^i \bld{v} \|_{H^{1}(\Omega_i)} \leq C \|\bld{v} \|_{H^{1}(\Omega_i)}, \quad  \|S_h r \|_{H^{1}(\Omega_f)} \leq C \|r \|_{H^{1}(\Omega_f)}.
\end{equation}
We will also need the trace inequality
\begin{equation}\label{trace}
\|\bld{v}\|_{L^{2}(\Sigma)} \leq C\|\bld{v}\|_{H^{1}(\Omega_i)}.
\end{equation}

Furthermore, it is well known that for $\bld{v} \in [H^{2}(\Omega_i)]^2$, $i=s,f$, and $r \in  H^{2}(\Omega_f)$,
\begin{alignat}{3}
\|R_h^i \bld{v} - \bld{v}\|_{L^{2}(\Omega_{i})} &\leq C h^{2} \|\bld{v}\|_{H^{2}(\Omega_i)}, \quad & \|S_h r - r \|_{L^{2}(\Omega_{f})} &\leq C h^{2} \|r\|_{H^{2}(\Omega_f)}, \label{eq1} \\
\|R_h^i \bld{v} - \bld{v}\|_{H^{1}(\Omega_{i})} &\leq C h \|\bld{v}\|_{H^{2}(\Omega_i)},  \quad & \|S_h r - r \|_{H^{1}(\Omega_{f})} &\leq C h \|r\|_{H^{2}(\Omega_f)}.\label{eq2}
\end{alignat}

The interpolants restricted to the $\Sigma$ will be the Scott-Zhang interpolant on $\Sigma$ so we have 
\begin{alignat*}{3}
\|R_h^i \bld{v} - \bld{v}\|_{L^{2}(\Sigma)}+ h\|R_h^i \bld{v} - \bld{v}\|_{H^{1}(\Sigma)} &\leq C h^2 \|\bld{v}\|_{H^{2}(\Sigma)}.
\end{alignat*}
Thus, using the trace estimate with this approximation result we have 
\begin{alignat}{3}
\|R_h^i \bld{v} - \bld{v}\|_{L^{2}(\Sigma)}+ h\|R_h^i \bld{v} - \bld{v}\|_{H^{1}(\Sigma)} &\leq C h^2 \|\bld{v}\|_{H^{3}(\Omega_i)}, \quad \forall \bld{v} \in [H^3(\Omega_i)]^2. \label{eq2S} 
\end{alignat}



We may now state consistency-type results for the solid and the fluid.
\begin{lemma}\label{consistency2}
The following identities hold for all $\bxi_h \in \bV_h^s$, $\bv_h \in \bV_h^f$, and $\bmu_h \in \bV_h^g$.
\begin{alignat}{1}
\nonumber & \rho_s ( \pdt R_h^s \bq^{n+1}, \bxi_h)_s+a_s(R_h^s \bfeta^{n+1/2}, \bxi_h) +\alpha \la (\pdt R_h^s \bfeta^{n+1}- R_h^f \bu^n), \bxi_h\ra+ \la  \mathbb{P}_h \blam^n, \bxi_h \ra \\
 &= T_1 (\bxi_h) + \frac{1}{2}T_2 (\bxi_h) + V_1(\bxi_h)-  S_2 (\bxi_h) +  S_3 (\bxi_h) \label{cons11} ,
\end{alignat}
\begin{alignat}{2}
\rho_f(\pdt R_h^f \bu ^{n+1}, \bv_h)_f+2\mu (\eps( R_h^f \bu ^{n+1}), \eps( \bv_h))_f- (S_h p^{n+1},\dive \bv_h)_f-\la \mathbb{P}_h \blam^{n+1}, \bv_h \ra=&S_1(\bv_h) + V_2(\bv_h), && \label{cons22} \\
(\dive R_h^f \bu^{n+1}, \theta_h)_f+ h^2 (\nabla S_h p^{n+1}, \nabla \theta_h)_f=& V_3(\theta_h) + V_4(\theta_h), \quad &&  \label{cons33}
\end{alignat}
where
\begin{alignat*}{1}
T_1(\bxi_h):=&  \rho_s (\pdt R_h^s \bq^{n+1}-\pt  \bq^{n+1/2}, \bxi_h)_s ,\\
T_2(\bxi_h):=& \la  \blam^n-\blam^{n+1}, \bxi_h \ra , \\
S_1(\bv_h):= & \rho_f (\pdt R_h^f \bu^{n+1}- \pt R_h^f u^{n+1}, \bv_h)_f ,\\
S_2(\bmu_h) :=&\alpha \la R_h^f \bu^{n+1}-\pdt R_h^s \bfeta^{n+1}, \bmu_h \ra ,\\
S_3(\bmu_h):= & \alpha \la R_h^f \bu^{n+1}- R_h^f \bu^{n}, \bmu_h\ra ,\\
V_1(\bxi_h):=& a_{s}(R_h^s \bfeta^{n+1/2} - \bfeta^{n+1/2},\bxi_h)_s ,\\
V_2(\bv_h):=& 2\mu (\eps(R_h^f \bu^{n+1} - \bu^{n+1}), \eps(\bv_h))_f - (S_h p^{n+1} - p^{n+1}, \dive\bv_h)_f ,\\
V_3(\theta_h):=& (\dive (R_h^f \bu^{n+1} - \bu^{n+1}), \theta_h)_{f} , \\
V_4(\theta_h):=&  h^{2}( \nabla S_h p^{n+1}, \nabla \theta_{h})_{f}.
\end{alignat*}
\end{lemma}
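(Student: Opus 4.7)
The plan is to derive each of the three identities by testing the continuous weak formulation \eqref{weak1}--\eqref{weak4} against the same discrete function and then inserting the interpolants via add-subtract, booking the resulting consistency remainders. Two facts will be used repeatedly. First, by the trace identity \eqref{eq:traceS}, $\bxi_h|_\Sigma\in\bV_h^g$ for every $\bxi_h\in\bV_h^s$ and $\bv_h|_\Sigma\in\bV_h^g$ for every $\bv_h\in\bV_h^f$, so by the $L^2$-projection defining relation \eqref{L2} one has $\langle\blam,\bxi_h\rangle=\langle\mathbb{P}_h\blam,\bxi_h\rangle$ (and the analogue with $\bv_h$) with zero remainder. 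Second, the continuous interface coupling $\bu=\bq=\pt\bfeta$ on $\Sigma$ supplies the algebraic glue needed to rewrite the added Robin term.

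For \eqref{cons11}, I would start from \eqref{weak1} evaluated at $t^{n+1/2}$ with test function $\bxi_h$, split $\blam^{n+1/2}=\blam^n+\tfrac12(\blam^{n+1}-\blam^n)$ so that the leading $\blam^n$-piece can be replaced by $\mathbb{P}_h\blam^n$ at no cost by the projection property, while the half-step correction contributes $\tfrac12 T_2(\bxi_h)$. Swapping $\bq^{n+1/2}$ and $\bfeta^{n+1/2}$ for their Scott-Zhang interpolants then produces $T_1(\bxi_h)$ and $V_1(\bxi_h)$, respectively. Finally, adding the Robin contribution $\alpha\langle\pdt R_h^s\bfeta^{n+1}-R_h^f\bu^n,\bxi_h\rangle$ to both sides and inserting $\pm R_h^f\bu^{n+1}$ yields the algebraic identity
\[
\alpha\langle\pdt R_h^s\bfeta^{n+1}-R_h^f\bu^n,\bxi_h\rangle = \alpha\langle\pdt R_h^s\bfeta^{n+1}-R_h^f\bu^{n+1},\bxi_h\rangle + \alpha\langle R_h^f\bu^{n+1}-R_h^f\bu^n,\bxi_h\rangle = -S_2(\bxi_h)+S_3(\bxi_h),
\]
which is the discrete reflection of the continuous relation $\pt\bfeta=\bu$ on $\Sigma$.

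For \eqref{cons22}, I would apply the same recipe to \eqref{weak3} at $t^{n+1}$ (with $\theta=0$): the $\blam$-term is exact by the projection argument, the spatial interpolation discrepancies in the viscous and pressure contributions recombine into $V_2(\bv_h)$, and the time-derivative residual produces $S_1(\bv_h)$. Identity \eqref{cons33} is then immediate: using $(\dive\bu^{n+1},\theta_h)_f=0$ from \eqref{weak3} converts $(\dive R_h^f\bu^{n+1},\theta_h)_f$ into $V_3(\theta_h)$, and the stabilization term is $V_4(\theta_h)$ by definition. The only genuinely non-mechanical step is locating the splitting of the added Robin term in \eqref{cons11} that produces precisely $-S_2+S_3$; once this insertion of $\pm R_h^f\bu^{n+1}$ is spotted, the remainder of the argument is a bookkeeping exercise using the weak equations together with \eqref{L2}, \eqref{eq:traceS} and the continuous coupling $\bq=\pt\bfeta$. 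No inequalities or approximation estimates enter; this is a pure identity-level lemma whose conclusions are used as the starting point for the subsequent error bounds.
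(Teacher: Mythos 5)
Your proposal is correct and follows essentially the same route as the paper: test the (averaged) continuous weak formulation with the discrete test functions, use \eqref{eq:traceS} and \eqref{L2} to swap $\blam$ for $\mathbb{P}_h\blam$ at no cost, add and subtract the interpolants to generate $T_1$, $V_1$, $V_2$, $V_3$, and split the Robin term by inserting $\pm R_h^f\bu^{n+1}$ to obtain $-S_2+S_3$, with $-\la\blam^{n+1/2},\cdot\ra+\la\blam^n,\cdot\ra=\tfrac12 T_2$. The paper presents this by expanding the left-hand side rather than starting from the weak form, but the decomposition and every remainder term are identical.
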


\begin{proof}
For \eqref{cons11}, let $\mathbb{L}_1$ denote the left-hand side. Then we have
\begin{alignat*}{1}
\mathbb{L}_1=&\rho_s ( \pdt R_h^s \bq^{n+1} -  \pt \bq^{n+1/2}, \bxi_h)_s+ \rho_s ( \pt \bq^{n+1/2}, \bxi_h)_s + a_s(R_h^s \bfeta^{n+1/2},  \bxi_h) \\
&+\alpha \la (\pdt R_h^s \bfeta^{n+1}- R_h^f \bu^{n+1}), \bxi_h\ra + S_3(\bxi_h) + \la \blam^n, \bxi_h \ra \\
=&T_{1}(\bxi_h)  - a_{s}(\bfeta^{n+1/2},\bxi_h) + a_s(R_h^s \bfeta^{n+1/2},  \bxi_h) -\la \blam^{n+1/2},\bxi_h \ra \\
&-S_2(\bxi_h) + S_3(\bxi_h) + \la \blam^n, \bxi_h \ra  \\
=& T_{1}(\bxi_h) + V_1(\bxi_h) +\frac{1}{2} T_{2}(\bxi_h) -  S_2 (\bxi_h) +  S_3 (\bxi_h).
\end{alignat*}

For \eqref{cons22}, let $\mathbb{L}_2$ denote the left hand side. We have
\begin{alignat*}{1}
\mathbb{L}_2 =&  \rho_f   (\pdt R_h^f \bu^{n+1} - \pt \bu^{n+1}, \bv_h)_f + \rho_f  (\pt \bu^{n+1}, \bv_h)_f \\
&+2\mu (\eps(R_h^f \bu^{n+1}), \eps( \bv_h))_f- (S_h p^{n+1}, \dive \bv_h)_f-\la  \blam^{n+1}, \bv_h \ra\\
=& S_1(\bv_h) -2\mu (\eps(\bu^{n+1}), \eps( \bv_h))_f + ( p^{n+1}, \dive \bv_h)_f + \la \blam^{n+1}, \bv_h \ra \\
&+2\mu (\eps(R_h^f \bu^{n+1}), \eps( \bv_h))_f- (S_h p^{n+1}, \dive \bv_h)_f-\la  \blam^{n+1}, \bv_h \ra\\
=& S_1(\bv_h)  + V_2(\bv_h).
\end{alignat*}

Recall that $\dive \bu^{n} = 0$ for all $n\geq 1$.  For \eqref{cons33}, let $\mathbb{L}_3$ denote the left hand side. We have
\begin{alignat*}{1}
\mathbb{L}_3 =&(\dive  R_h^f \bu^{n+1}, \theta_h)_f + h^2 (\nabla S_h p^{n+1}, \nabla \theta_h)_f  \\
=& (\dive  R_h^f \bu^{n+1} - \dive \bu^{n+1}, \theta_h)_f + h^2 (\nabla S_h p^{n+1}, \nabla \theta_h)_f \\
=& V_3(\theta_h) + V_4(\theta_h).
\end{alignat*}

\end{proof}

From Lemma \ref{consistency2}, we find that the following error equations follow immediately after applying \eqref{solid} and \eqref{fluid}.
\begin{corollary}
For all $\bxi_h \in \bV_h^s$, $\bv_h \in \bV_h^f$, and  $\theta_h \in M_h^f$, the following identities hold.
\begin{alignat}{1}
\nonumber & \rho_s ( \pdt \bQ_h^{n+1}, \bxi_h)_s+a_s(\bH_h^{n+1/2}, \bxi_h) +\alpha \la (\pdt \bH_h^{n+1/2}- \bU_h^n), \bxi_h\ra+ \la  \bLam_h^n, \bxi_h \ra \\
 &=-T_1 (\bxi_h) - \frac{1}{2}T_2 (\bxi_h) - V_1(\bxi_h) +  S_2 (\bxi_h) -  S_3 (\bxi_h)\label{errsolid11} ,
\end{alignat}
\begin{alignat}{2}
\rho_f(\pdt \bU_h ^{n+1}, \bv_h)_f+2\mu (\eps(\bU_h^{n+1}), \eps( \bv_h))_f- (P_h^{n+1}, \dive \bv_h)_f-\la  \bLam_h^{n+1}, \bv_h \ra=&-S_1(\bv_h) - V_2(\bv_h), && \label{errfluid11}\\
(\dive \bU_h^{n+1}, \theta_h)_f+ h^2 (\nabla P_h^{n+1}, \nabla \theta_h)_f=&-V_3(\theta_h) - V_4(\theta_h). \quad &&  \label{errfluid22}
\end{alignat}
\end{corollary}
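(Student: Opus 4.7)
The proof is a direct subtraction argument: the three consistency identities of Lemma~\ref{consistency2} are written so that their left-hand sides are exactly the forms that appear in the fully discrete scheme \eqref{solid}--\eqref{fluid}, but with the Scott-Zhang/$L^2$-projected exact solution substituted in place of the discrete iterates. Since the right-hand sides of \eqref{solid1}, \eqref{fluid1}, and \eqref{fluid1.5} all vanish, subtracting the scheme equations from the consistency identities (or vice versa) produces equations in which the LHS is linear in the difference between the interpolated exact solution and the discrete solution — i.e., in the error variables $\bH_h$, $\bQ_h$, $\bU_h$, $P_h$, $\bLam_h$ — and the RHS is simply the negative of the consistency residual $(T_1+\tfrac12 T_2+V_1-S_2+S_3)$, $(S_1+V_2)$, or $(V_3+V_4)$.

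Concretely, for \eqref{errsolid11} I would test \eqref{solid1} with $\bxi_h\in\bV_h^s$, subtract \eqref{cons11} tested with the same $\bxi_h$, and use linearity of $(\cdot,\cdot)_s$, $a_s(\cdot,\cdot)$, and $\la\cdot,\cdot\ra$ to collapse the differences of arguments into the error variables $\bQ_h^{n+1}=\bq_h^{n+1}-R_h^s\bq^{n+1}$, $\bH_h^{n+1/2}=\bfeta_h^{n+1/2}-R_h^s\bfeta^{n+1/2}$, $\bU_h^n=\bu_h^n-R_h^f\bu^n$, and $\bLam_h^n=\blam_h^n-\mathbb P_h\blam^n$. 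The sign flip on the RHS is automatic because the scheme's RHS is zero. The same template produces \eqref{errfluid11} by subtracting \eqref{fluid1} from \eqref{cons22}, and \eqref{errfluid22} by subtracting \eqref{fluid1.5} from \eqref{cons33}.

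There is essentially no obstacle here beyond careful bookkeeping of arguments and signs. The only point worth verifying is that the lifting/trace term $\la\bLam_h^n,\bxi_h\ra$ on $\Sigma$ indeed carries over correctly: this uses \eqref{L2} only implicitly — when $\bxi_h\in\bV_h^s$ its trace lies in $\bV_h^g$ by \eqref{eq:traceS}, so $\la\blam_h^n-\mathbb P_h\blam^n,\bxi_h\ra$ is meaningful and matches the corresponding term in the scheme. Thus the corollary follows "immediately" in the sense stated, and no further analytic input beyond the definitions of the error quantities is needed.
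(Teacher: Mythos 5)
Your proposal is correct and is exactly the argument the paper intends: the corollary is obtained by subtracting the consistency identities of Lemma~\ref{consistency2} from the corresponding scheme equations \eqref{solid} and \eqref{fluid} (whose right-hand sides vanish) and using linearity to collect the error variables, which is why the paper states that the identities ``follow immediately.'' Your remark on the interface term being well defined via \eqref{eq:traceS} is a fine extra check, and no further input is needed.
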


We also will need the following identities.
\begin{lemma}
The following identities hold:
\begin{subequations}\label{ids1}
\begin{equation}\label{erraux1011}
\bLam_h^{n+1}-\bLam_h^n= \alpha(\pdt \bH_h^{n+1}-  \bU_h^{n+1})+ \bg_1^{n+1}
\end{equation}
\begin{equation}\label{erraux1021}
\bQ_h^{n+1/2}=\pdt \bH_h^{n+1}- R_h^s \bg_2^{n+1},
\end{equation}
\end{subequations}
where
\begin{alignat*}{1}
\bg_1^{n+1}&:=\alpha(\pdt R_h^s \bfeta^{n+1}- R_h^f \bu^{n+1}) -(  \mathbb{P}_h \blam^{n+1}-\mathbb{P}_h \blam^n), \\
\bg_2^{n+1}&:= \bq^{n+1/2}-\pdt \bfeta^{n+1}.
\end{alignat*}
\end{lemma}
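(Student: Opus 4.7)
The plan is to establish both identities by direct algebraic rearrangement using the defining relations for the discrete errors $\bLam_h$, $\bH_h$, $\bQ_h$, $\bU_h$ together with the two algorithmic identities \eqref{aux101} and \eqref{aux111}. No inequalities, consistency computations, or approximation estimates are required here; everything is bookkeeping, so the main care is to track which quantities live on $\Sigma$ versus $\Omega_s$ versus $\Omega_f$.

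For \eqref{erraux1011}, I would begin from the definition $\bLam_h^m := \blam_h^m - \mathbb{P}_h \blam^m$ and split $\bLam_h^{n+1} - \bLam_h^n = (\blam_h^{n+1} - \blam_h^n) - (\mathbb{P}_h \blam^{n+1} - \mathbb{P}_h \blam^n)$. The key input is the discrete Robin identity \eqref{aux101}, which on $\Sigma$ gives $\blam_h^{n+1} - \blam_h^n = \alpha(\bq_h^{n+1/2} - \bu_h^{n+1})$. Combining this with \eqref{aux111} yields $\blam_h^{n+1} - \blam_h^n = \alpha(\pdt \bfeta_h^{n+1} - \bu_h^{n+1})$ on $\Sigma$. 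I would then decompose $\bfeta_h^{n+1} = \bH_h^{n+1} + R_h^s \bfeta^{n+1}$ and $\bu_h^{n+1} = \bU_h^{n+1} + R_h^f \bu^{n+1}$ and group the discrete-error contributions separately from the projection contributions. This produces the claimed decomposition with $\bg_1^{n+1}$ appearing exactly as defined.

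For \eqref{erraux1021}, I would again use \eqref{aux111} to replace $\bq_h^{n+1/2}$ by $\pdt \bfeta_h^{n+1}$ in the definition $\bQ_h^{n+1/2} := \bq_h^{n+1/2} - R_h^s \bq^{n+1/2}$. Writing $\bfeta_h^{n+1} = \bH_h^{n+1} + R_h^s \bfeta^{n+1}$ and using the linearity of $R_h^s$ to commute it with the discrete time difference $\pdt$ gives $\bQ_h^{n+1/2} = \pdt \bH_h^{n+1} + R_h^s(\pdt \bfeta^{n+1} - \bq^{n+1/2})$, which is precisely $\pdt \bH_h^{n+1} - R_h^s \bg_2^{n+1}$.

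There is no genuine obstacle to this lemma; it is a purely algebraic consequence of the algorithm. The one subtlety worth emphasizing is the mesh-matching condition \eqref{eq:traceS}, which was already used in deriving \eqref{aux101} from \eqref{eq:stress2} and which allows the first identity to be written as a pointwise relation on $\Sigma$ between traces of $\bV_h^f$ and $\bV_h^s$ functions. Once that is noted, the argument reduces to substitution and collection of terms.
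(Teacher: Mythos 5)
Your argument is correct and matches the paper's proof essentially verbatim: both identities are obtained by substituting \eqref{aux101} (equivalently \eqref{solid2}/\eqref{aux111}) into the definitions of $\bLam_h$ and $\bQ_h$, decomposing $\bfeta_h^{n+1}=\bH_h^{n+1}+R_h^s\bfeta^{n+1}$ and $\bu_h^{n+1}=\bU_h^{n+1}+R_h^f\bu^{n+1}$, and collecting the interpolant terms into $\bg_1^{n+1}$ and $\bg_2^{n+1}$. Your remark on the role of the trace-matching condition \eqref{eq:traceS} is a correct observation that the paper leaves implicit.
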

\begin{proof}

Using \eqref{aux101} we obtain
\begin{alignat*}{2}
\bLam_h^{n+1}-\bLam_h^n=& (\blam_h^{n+1}-\blam_h^n)- (\mathbb{P}_h \blam^{n+1}-\mathbb{P}_h\blam^n) \quad &&  \\
=&  \alpha(\pdt \bfeta_h^{n+1}-  \bu_h^{n+1}) - (\mathbb{P}_h \blam^{n+1}-\mathbb{P}_h \blam^n) \quad &&\\
=&  \alpha(\pdt \bH_h^{n+1}- \bU_h^{n+1})+\bg_1^{n+1}. \quad &&
\end{alignat*}
We also have by \eqref{solid2}
\begin{alignat*}{1}
\bq_h^{n+1/2}=&\bq_h^{n+1}-R_h^s  \bq^{n+1/2} \\
=& \pdt \bH_h^{n+1}+\pdt R_h^s \bfeta^{n+1} -R_h^s \bq^{n+1/2}  \\
=&\pdt \bH_h^{n+1}- R_h^s \bg_2^{n+1}.
\end{alignat*}
\end{proof}

\subsection{Approximation Results}
Finally, before we prove error estimates we will prove approximation inequalities. We recall the definition of space-time norms where $X$ is a Hilbert space
\begin{equation*}
\|\bv\|_{L^2(r_1,r_2 ; X)}^2:= \int_{r_1}^{r_2} \|\bv(\cdot, s)\|_X^2 ds.
\end{equation*}
The following are a series of approximation estimates. The proofs are elementary and appear in the appendix for completeness. 

\begin{lemma}\label{ineqLemma}
The following inequalities hold
\begin{subequations}
\begin{alignat}{2}
 \|\pdt R_h^s \bq^{n+1}-\pt \bq^{n+1/2}\|_{L^2(\Omega_s)}^2 \le& C \left(\frac{ h^{4}}{\Dt} \|\pt \bq\|_{L^2(t_n, t_{n+1}; H^2(\Omega_s))}^2 +\Dt^3  \|\pt^3 \bq\|_{L^2(t_n, t_{n+1}; L^2(\Omega_s))}^2\right), \label{tt1}\\ 
 \|\blam^{n+1}- \blam^n\|_{L^2(\Sigma)}^2\le &   C \Dt \bigg( \mu^2 \|\pt \bu\|_{L^2(t_n, t_{n+1}; H^2(\Omega_f))}^2 + \|\pt p\|_{L^2(t_n, t_{n+1}; H^1(\Omega_f))}^2\bigg) ,  \label{tt2} \\
  \|  R_h^f \bu^{n+1}- R_h^f \bu^{n}\|_{L^2(\Sigma)}^2 \le &  C \Dt   \|\pt \bu\|_{L^2(t_n, t_{n+1}; H^1(\Omega_f))}^2, \label{tt4} \\
    \|R_h^s \bg_2^{n+1}\|_{L^2(\Sigma)}^2 \le &  C \Dt^3   \|\pt^3 \bfeta \|_{L^2(t_n, t_{n+1}; H^1(\Omega_s))}^2, \label{tt3} \\
 \|\pdt R_h^f \bu^{n+1}- \pt \bu^{n+1}\|_{L^2(\Omega_f)}^2 \le &C  \bigg(\frac{h^{4}}{\Dt} \|\pt \bu\|_{L^2(t_n, t_{n+1}; H^2(\Omega_f))}^2 + \Dt\|\pt^2 \bu\|_{L^2(t_n, t_{n+1}; L^2(\Omega_f))}^2 \bigg) , \label{tt5}\\ 
 \|\nabla R_h^s \bg_2^{n+1}\|_{L^2(\Omega_s)}^2 \le& C \Dt^3 \|\pt^3 \bfeta\|_{L^2(t_n, t_{n+1}; H^1(\Omega_s))}^2, \label{tt6}\\
 \|\bg_1^{n+1}\|_{L^2(\Sigma)}^2 \le & C \Dt \bigg( \mu^2\|\pt \bu\|_{L^2(t_n, t_{n+1}; H^2(\Omega_f))}^2  + \|\pt p\|_{L^2(t_n, t_{n+1}; H^1(\Omega_f))}^2 \nonumber\\
 &\hspace{1cm} +\alpha^2 \|\pt^2 \bfeta \|_{L^2(t_n, t_{n+1}; H^1(\Omega_f))}^2 \bigg) , \label{tt7} \\
\|S_h p^{n+1} - p^{n+1}\|_{L^{2}(\Omega_f)}^2 \le& C \bigg(\Dt h^4 \|\pt p\|_{L^2(t_n, t_{n+1}; H^2(\Omega_f))}^2 + \frac{h^4}{\Dt}\|p\|_{L^2(t_n, t_{n+1}; H^2(\Omega_f))}^2\bigg), \label{tt12} \\
\|\eps(R_h^f \bu^{n+1} - \bu^{n+1}) \|_{L^{2}(\Omega_f)}^2\le& C \bigg(\Dt h^2\|\pt \bu\|_{L^2(t_n, t_{n+1}; H^2(\Omega_f))}^2 + \frac{h^2}{\Dt}\|\bu \|_{L^2(t_n, t_{n+1}; H^2(\Omega_f))}^2\bigg), \label{tt13} \\
\|\nabla S_h p^{n+1}\|_{L^{2}(\Omega_f)}^2 \le&C\bigg( \Dt \|\pt p\|_{L^{2}(t_n, t_{n+1} ;H^{1}(\Omega_f))}^2 +\frac{1}{\Dt}\| p\|_{L^{2}(t_n, t_{n+1} ;H^{1}(\Omega_f))}^2 \bigg), \label{tt11} \\
\|\bu^{n+1}\|_{H^{3}(\Omega_f)}^2 \le& C(\Dt \|\pt \bu\|_{L^{2}(t_n, t_{n+1};H^{3}(\Omega_f))}^2+ \frac{1}{\Dt} \|\bu\|_{L^{2}(t_n, t_{n+1};H^{3}(\Omega_f))}^2) ,\label{tt16}\\ 
\|\nabla (R_h^s \bfeta^{n+1/2} - \bfeta^{n+1/2}) \|_{L^{2}(\Omega_s)}^2 \le& C \bigg(h^2 \Dt \|\pt \bfeta\|_{L^2(t_n, t_{n+1}; H^2(\Omega_s))}^2+ \frac{h^2}{\Dt}  \|\bfeta\|_{L^2(t_n, t_{n+1}; H^2(\Omega_s))}^2\bigg), \label{tt9} \\
\|\nabla (R_h^s-I) \pdt \bfeta^{n+1}\|_{L^{2}(\Omega_s)}^2 \le& C \frac{h^2}{\Dt}\|\pt \bfeta\|_{L^2(t_{n}, t_{n+1}; H^2(\Omega_s))}^2. \label{tt15}
\end{alignat}
\end{subequations}
\end{lemma}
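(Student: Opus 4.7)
Every inequality follows the same recipe: split the target expression into a purely spatial-interpolation error and a purely temporal-discretization error via one or two triangle-inequality insertions, then bound each piece by the Scott--Zhang approximation estimates \eqref{eq1}--\eqref{eq2S}, the stability bound \eqref{Stability}, and/or the trace inequality \eqref{trace}, combined with Taylor expansions with integral remainder whose time integral is then estimated by Cauchy--Schwarz. The ambient factors $\Dt$ or $\Dt^3$ (and occasionally the $\Dt^{-1}$ sitting next to a space-approximation prefactor) all emerge from Cauchy--Schwarz on integrals over subintervals of length $\Dt$; the exponent simply tracks whether the temporal residual is first order (backward-Euler truncation at $t_{n+1}$) or second order (midpoint truncation at $t_{n+1/2}$).

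\textbf{Temporal-interpolation terms.} For \eqref{tt1} I would write
\[
\pdt R_h^s \bq^{n+1} - \pt \bq^{n+1/2} = \pdt (R_h^s - I)\bq^{n+1} + (\pdt \bq^{n+1} - \pt \bq^{n+1/2}).
\]
The first summand equals $\Dt^{-1}(R_h^s-I)\int_{t_n}^{t_{n+1}} \pt\bq(s)\, ds$, to which \eqref{eq1} and Cauchy--Schwarz give the $h^4/\Dt$ bound; the second is a midpoint quadrature residual whose integral remainder involves $\pt^3 \bq$ and yields the $\Dt^3$ bound. Inequality \eqref{tt5} follows the same split but for \emph{backward-Euler} truncation at $t_{n+1}$, producing the weaker $\Dt$ factor paired with $\pt^2\bu$. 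Inequality \eqref{tt4} is direct: $R_h^f\bu^{n+1} - R_h^f\bu^n = R_h^f\int_{t_n}^{t_{n+1}} \pt\bu\, ds$, and \eqref{Stability}, \eqref{trace}, and Cauchy--Schwarz finish it.

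\textbf{Trace and midpoint-type terms.} For \eqref{tt2} I use $\blam = \sigma_f(\bu,p)\bn_f$, write $\blam^{n+1}-\blam^n = \int_{t_n}^{t_{n+1}} \pt\sigma_f(\bu,p)\, ds$, then apply \eqref{trace} to the velocity and pressure pieces and Cauchy--Schwarz in time to extract the $\Dt$ factor. Inequalities \eqref{tt3} and \eqref{tt6} expand the midpoint residual $\bg_2^{n+1} = \bq^{n+1/2} - \pdt\bfeta^{n+1}$ by Taylor about $t_{n+1/2}$, giving a quadratic-in-time remainder in $\pt^3\bfeta$ that yields $\Dt^3$ after $L^2$ integration, with $R_h^s$ absorbed by \eqref{Stability} and (for \eqref{tt3}) \eqref{trace}. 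The composite space-approximation estimates \eqref{tt9}, \eqref{tt11}--\eqref{tt13}, \eqref{tt15}, \eqref{tt16} all reduce to the identity $\varphi^{n+1} = \Dt^{-1}\int_{t_n}^{t_{n+1}}\varphi\, ds + \Dt^{-1}\int_{t_n}^{t_{n+1}}(t_{n+1}-s)\pt\varphi\, ds$, which after \eqref{eq1}--\eqref{eq2} and Cauchy--Schwarz produces the two additive terms on the right-hand sides.

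\textbf{Main obstacle.} The delicate estimate is \eqref{tt7} for $\bg_1^{n+1}$, because its two groups must cancel at leading order. The plan is to insert $\pm\alpha\,\pdt R_h^f\bu^{n+1}$ and exploit the boundary compatibility $R_h^s|_\Sigma = R_h^f|_\Sigma$ together with the continuous coupling $\bu|_\Sigma = \pt\bfeta|_\Sigma$, which lets me rewrite the first group as $\alpha R_h^f(\pdt\bfeta^{n+1}-\pt\bfeta^{n+1})$ on $\Sigma$; this is a pure backward-Euler residual producing the $\alpha^2\|\pt^2\bfeta\|$ contribution via \eqref{trace} and \eqref{Stability}. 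The remaining stress-increment piece $\mathbb P_h(\blam^{n+1}-\blam^n)$ reduces to \eqref{tt2} thanks to the $L^2(\Sigma)$-stability of $\mathbb P_h$, supplying the viscous and pressure contributions. Beyond this coupling argument, the remaining work is routine bookkeeping of constants.
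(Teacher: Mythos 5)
Your proposal follows essentially the same route as the paper's appendix proof: the same splitting into Scott--Zhang spatial error plus Taylor-remainder temporal error, the same use of \eqref{Stability}, \eqref{trace}, and Jensen/Cauchy--Schwarz to extract the $\Dt$ powers, the same $\varphi^{n+1}=\bar{\varphi}^{n+1}+(\varphi^{n+1}-\bar{\varphi}^{n+1})$ device for \eqref{tt9}--\eqref{tt16}, and for \eqref{tt7} the same key cancellation via $\bu=\pt\bfeta$ on $\Sigma$ and the interface compatibility of the interpolants, with the stress increment handled by the $L^2(\Sigma)$-stability of $\mathbb{P}_h$ and \eqref{tt2}. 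The argument is correct as proposed.
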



 \subsection{Main Theorem}
Now we can prove the main error estimate.
We define the following quantities:
\begin{alignat*}{1}
\mathcal{S}_h^n:= & \| \bH_h^{n}\|_{S}^2+ \rho_s\|\bQ_h^{n}\|_{L^2(\Omega_s)}^2 +  \rho_f \|\bU_h^{n}\|_{L^2(\Omega_f)}^2,\\
 \mathcal{E}_h^n:= & \Dt \alpha \|\bU_h^{n}\|_{L^2(\Sigma)}^2 +\frac{\Dt}{\alpha} \|\bLam_h^{n}\|_{L^2(\Sigma)}^2, \\
\mathcal{W}_h^n :=&\rho_f \|\bU_h^{n}-\bU_h^{n-1}\|_{L^2(\Omega_f)}^2+ 4\mu \Dt \|\eps(\bU_h^n)\|_{L^2(\Omega_f)}^2+ 2\Dt h^2 \|\nabla P_h^n\|_{L^2(\Omega_s)}^2, \\
\mathcal{Z}_h^n:=&  \Dt \alpha \|  \pdt \bH_h^{n} - \bU_h^{n-1}\|_{L^2(\Sigma)}^2. 
\end{alignat*}

\begin{theorem}\label{mainthm2}
Let $(\bu, \blam, \bfeta, \bq) $ be a regular enough solution of \eqref{weak} and let  $\{( \bfeta_h^n, \bq_h^n,\bu_h^n,  p_h^n,\blam_h^n)\}_{n=1}^N$  be given by Algorithm~\ref{alg:fullRR}. 
The following discrete error estimate holds: 
\begin{alignat*}{1}
 &\max_{1 \le  m \le N} ( \mathcal{S}_h^{m} + \mathcal{E}_h^{m})+  \sum_{m=1}^N  \left( \mathcal{W}_h^{m}  + \mathcal{Z}_h^{m} \right) \le  4(\mathcal{S}_h^{0} + \mathcal{E}_h^{0})+C Y \Psi,
 \end{alignat*}
where
 \begin{alignat*}{1}
 Y:=&\bigg(\frac{T}{\alpha}(\mu^2 + 1) + \alpha T \bigg) \Dt + \bigg(\frac{1}{\alpha}(\mu^2 +1) + \rho_f T + \alpha \bigg)\Dt^2 \\
&+ T \alpha \Dt^3 + \bigg( T(\rho_s + 1) + \alpha + 1 \bigg) \Dt^4 + \bigg( T + 1 + \alpha \bigg) h^2 \\
&+ \bigg(T(\rho_f + \rho_s ) + \frac{1}{\mu}\bigg)h^4 + (1+ \mu)h^2\Dt^2 + \frac{ h^4 \Dt^2}{\mu},
 \end{alignat*}
 and
\begin{alignat*}{1}
\Psi:=&  \|\pt \bfeta \|_{L^2(0, T; H^2(\Omega_s))}^2  +\|\pt^2 \bfeta \|_{L^2(0, T; H^2(\Omega_s))}^2+ \|\pt^3 \bfeta\|_{L^2(0, T; H^1(\Omega_s))}^2  + \|\pt^4 \bfeta \|_{L^2(0, T; L^2(\Omega_s))}^2 \\
&+\|\bu\|_{L^2(0, T; H^3(\Omega_f))}^2 +\|\pt \bu\|_{L^2(0, T; H^2(\Omega_f))}^2 
+  \|\pt^2 \bu\|_{L^2(0, T; L^2(\Omega_f))}^2  
+  \|p\|_{L^2(0, T; H^1(\Omega_f))}^2\\
&+ \|\pt p\|_{L^2(0, T; H^1(\Omega_f))}^2 + \|\bfeta\|_{L^\infty (0,T;H^2(\Omega_s))}^2.
 \end{alignat*}

\end{theorem}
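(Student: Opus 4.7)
The plan is to mirror the stability argument of Lemma~\ref{lem:stab} but now applied to the error equations \eqref{errsolid11}--\eqref{errfluid22}, treating every right-hand side residual ($T_i$, $S_i$, $V_i$) together with the perturbations $\bg_1^{n+1}$, $\bg_2^{n+1}$ produced by Lemma~\ref{ids1} as consistency forcing. First I would test \eqref{errsolid11} with $\bxi_h = \pdt \bH_h^{n+1}$ (which differs from $\bQ_h^{n+1/2}$ by $R_h^s\bg_2^{n+1}$, by \eqref{erraux1021}, producing a residual that is under control by \eqref{tt3} and \eqref{tt6}), test \eqref{errfluid11} with $\bv_h = \bU_h^{n+1}$ and \eqref{errfluid22} with $\theta_h = P_h^{n+1}$, and add. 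The left-hand side then reproduces, \emph{verbatim}, the algebra that led in the proof of Lemma~\ref{lem:stab} to
\begin{equation*}
\tfrac12\bigl(\mathcal S_h^{n+1}-\mathcal S_h^{n}\bigr)+\tfrac12\mathcal W_h^{n+1}+\tfrac12\mathcal Z_h^{n+1} = \Dt\,J_{\rm err}^{n+1}+\text{(residuals)},
\end{equation*}
where the stress expression $J_{\rm err}^{n+1}$ has the same form as $J$ in the stability proof.

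The second step is to unpack $J_{\rm err}^{n+1}$ using the fully discrete counterpart of \eqref{aux101}, namely \eqref{erraux1011}. The identity \eqref{eq131} then telescopes the $\|\bU_h^n\|^2_{L^2(\Sigma)}$ and $\|\bLam_h^n\|^2_{L^2(\Sigma)}$ contributions into $\mathcal E_h^{n+1}-\mathcal E_h^n$, and produces $-\tfrac{\alpha}{2}\|\pdt\bH_h^{n+1}-\bU_h^n\|^2_{L^2(\Sigma)}$, which is exactly $-\tfrac12\mathcal Z_h^{n+1}/\Dt$; crucially, the difference $\bLam_h^{n+1}-\bLam_h^n$ now carries an extra additive $\bg_1^{n+1}$ compared with Lemma~\ref{lem:stab}. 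Collecting this, the counterpart of the closing identity becomes
\begin{equation*}
\tfrac12\bigl(\mathcal S_h^{n+1}+\mathcal E_h^{n+1}\bigr)+\tfrac12\bigl(\mathcal W_h^{n+1}+\mathcal Z_h^{n+1}\bigr)=\tfrac12\bigl(\mathcal S_h^{n}+\mathcal E_h^{n}\bigr)+\Dt\,\mathcal R^{n+1},
\end{equation*}
where $\mathcal R^{n+1}$ gathers all consistency contributions plus mixed products $\la \bg_1^{n+1}, \cdot \ra$ and $\la R_h^s\bg_2^{n+1},\cdot\ra$ arising from the above manipulation.

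The third step is to bound $\mathcal R^{n+1}$ term by term with Cauchy--Schwarz and Young's inequality, choosing the Young weights so that the resulting quadratic-in-error pieces can be absorbed by the dissipation on the left: $4\mu\Dt\|\eps(\bU_h^{n+1})\|^2_{L^2(\Omega_f)}$ absorbs the $V_2$ contribution after Korn/trace, $2h^2\Dt\|\nabla P_h^{n+1}\|^2$ absorbs $V_4$, the $\alpha\Dt$ interface term absorbs the pieces of $S_2$, $S_3$, $\bg_1$ paired against $\pdt\bH_h^{n+1}-\bU_h^{n+1}$, and the remaining pieces are paired either with $\rho_f\Dt\|\bU_h^{n+1}\|^2$ or simply kept on the right as data. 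At this point Lemma~\ref{ineqLemma} provides a one-to-one bound for each residual in terms of time-integrated norms of $\bu$, $p$, $\bfeta$, $\bq$, giving contributions of the advertised types $\Dt^2$, $h^2\Dt$, $h^4$, $h^4/\Dt$; the sums in $n$ turn these into the time-integrated norms appearing in $\Psi$ and produce the $Y$-factor upon collecting coefficients. Summation from $n=0$ to $M-1$ and a maximization over $M$ then gives the claimed bound, with a factor of $4$ on the initial energy to account for the Young absorption steps.

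The main obstacle, and the one that demands care, is the bookkeeping of $\alpha$-dependencies in $\mathcal R^{n+1}$: the $\bg_1^{n+1}$ term has an $\alpha$-dependent part (from $\alpha(\pdt R_h^s\bfeta^{n+1}-R_h^f\bu^{n+1})$) and an $\alpha$-independent part (from $\mathbb P_h\blam^{n+1}-\mathbb P_h\blam^n$); when it is paired against the telescoped difference $\bLam_h^{n+1}-\bLam_h^n$ one must simultaneously (i) avoid a $1/\alpha$ blow-up that would destroy robustness in the added-mass limit, and (ii) avoid introducing terms that cannot be absorbed by $\mathcal E_h^{n+1}$ and $\mathcal Z_h^{n+1}$. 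The resolution is to distribute the Young weights across \eqref{tt7} consistently with how $\bg_1^{n+1}$ enters both the stress-identity residual and the $-\bl \bLam_h^n,\pdt\bH_h^{n+1}\br$ term, so that the $\alpha$ and $1/\alpha$ factors appear only in the combinations recorded in $Y$. Once this bookkeeping is done, no discrete Gronwall is needed — the identity is telescoping exactly as in Lemma~\ref{lem:stab} — and the estimate follows.
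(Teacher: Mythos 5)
Your overall architecture is the right one and matches the paper's: test the error equations with discrete energy test functions, use \eqref{erraux1011} together with \eqref{eq131} to telescope the interface quantities into $\mathcal E_h^{n+1}-\mathcal E_h^{n}$ and $\mathcal Z_h^{n+1}$, absorb with Young's inequality, invoke Lemma~\ref{ineqLemma}, and sum without any Gr\"onwall argument. However, there are two concrete gaps. First, your choice of solid test function $\bxi_h=\pdt\bH_h^{n+1}$ does not work as claimed. The residual it produces relative to $\bQ_h^{n+1/2}$ contains the term $\rho_s(\pdt\bQ_h^{n+1},R_h^s\bg_2^{n+1})_s$, and the first factor $\pdt\bQ_h^{n+1}$ is \emph{not} controlled by any of $\mathcal S_h$, $\mathcal E_h$, $\mathcal W_h$, $\mathcal Z_h$; the estimates \eqref{tt3} and \eqref{tt6} only control the second factor, so Cauchy--Schwarz plus Young cannot close this term. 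The paper instead tests with $\bxi_h=\bQ_h^{n+1/2}$, so that the inertia term telescopes exactly into $\frac{\rho_s}{2}(\|\bQ_h^{n+1}\|^2_{L^2(\Omega_s)}-\|\bQ_h^{n}\|^2_{L^2(\Omega_s)})$ and the $\bg_2$-residual lands on the elastic form as $a_s(\bH_h^{n+1/2},R_h^s\bg_2^{n+1})$, where it is paired with the energy-controlled factor $\bH_h^{n+1/2}$. Your route could in principle be salvaged by an additional Abel summation in time on the inertia residual, but that is a missing step, and it would alter the coefficients in $Y$.

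Second, your step three asserts that all remaining consistency terms are handled by Cauchy--Schwarz and Young, but the term $V_1(\cdot)=a_s((R_h^s-I)\bfeta^{n+1/2},\cdot)$ cannot be: pairing it against either $\bQ_h^{n+1/2}$ or $\pdt\bH_h^{n+1}$ requires a bound on $\|\bQ_h^{n+1/2}\|_S$ or on $\Dt^{-1}\|\bH_h^{n+1}-\bH_h^{n}\|_S$, neither of which the scheme controls, and an inverse inequality would cancel the factor $h$ from the interpolation error and destroy convergence. Since $R_h^s$ is a Scott--Zhang interpolant (chosen for the interface trace-matching property) rather than a Ritz projection, $V_1$ does not vanish, and the paper must perform a discrete integration by parts in time, producing the telescoping boundary contribution $B^{n+1}=a_s((R_h^s-I)\bfeta^{n},\bH_h^{n})-a_s((R_h^s-I)\bfeta^{n+1},\bH_h^{n+1})$ plus the controllable term $\Dt\,a_s((R_h^s-I)\pdt\bfeta^{n+1},\bH_h^{n+1/2})$; the boundary contribution is then summed and absorbed at the final time. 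This summation-by-parts device is an essential ingredient absent from your plan. The remainder of your outline --- the role of $\bg_1^{n+1}$ in the expansion of $\|\bLam_h^{n+1}-\bLam_h^{n}\|^2_{L^2(\Sigma)}$ and the $\alpha$ versus $1/\alpha$ bookkeeping --- is consistent with the paper's treatment.
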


\begin{proof}

Using  \eqref{erraux1021} we have
\begin{alignat*}{1}
 a_s(\bH_{h}^{n+1/2}, \bQ_{h}^{n+1/2} )=  \frac{1}{2 \Dt}\|\bH_h^{n+1}\|_{S}^2-\frac{1}{2 \Dt}\|\bH_h^{n}\|_{S}^2 -a_s(\bH_h^{n+1/2}, R_h^s \bg_2^{n+1}).
\end{alignat*}

If we let $\bxi_h=\bQ_h^{n+1/2}$  in \eqref{errsolid11} we obtain
\begin{alignat}{1}
&\frac{1}{2}\|\bH_h^{n+1}\|_{S}^2+ \frac{\rho_s}{2}\|\bQ_h^{n+1}\|_{L^2(\Omega_s)}^2 \label{5341} \\
=&\frac{1}{2} \|\bH_h^{n}\|_{S}^2+\frac{\rho_s}{2}  \|\bQ_h^{n}\|_{L^2(\Omega_s)}^2  +\Dt \, a_s(\bH_h^{n+1/2}, R_h^s\bg_2^{n+1})  -\Dt T_1(\bQ_h^{n+1/2})  \nonumber\\
&-\frac{\Dt}{2} T_2(\bQ_h^{n+1/2}) - \Dt V_1(\bQ_h^{n+1/2})  +\Dt S_2(\bQ_h^{n+1/2})  -\Dt S_3(\bQ_h^{n+1/2})+ J_1,  \nonumber
\end{alignat}
where 
\begin{alignat*}{1}
J_1 := &-\alpha \Dt \bl ( \partial_{\Dt} \bH_h^{n+1} - \bU_h^{n}),\bQ_h^{n+1/2} \ra-\Dt \la \bLam_h^{n}, \bQ_h^{n+1/2} \ra .
\end{alignat*}
We simplify $J_1$ by using \eqref{erraux1011}
\begin{alignat*}{1}
J_1 =&-\alpha \Dt \bl ( \partial_{\Dt} \bH_h^{n+1} - \bU_h^{n+1}),\bQ_h^{n+1/2} \ra+ \Dt \la \bLam_h^{n+1} -\bLam_h^{n}, \bQ_h^{n+1/2} \ra\\
& - \alpha \Dt \bl \bU_h^{n+1} - \bU_h^{n}, \bQ_h^{n+1/2}\ra - \Dt \la \bLam_h^{n+1}, \bQ_h^{n+1/2}\ra \\
=&\Dt \la  \bg_1^{n+1}, \bQ_h^{n+1/2} \ra- \alpha \Dt \bl \bU_h^{n+1} - \bU_h^{n}, \bQ_h^{n+1/2}\ra - \Dt \la \bLam_h^{n+1}, \bQ_h^{n+1/2}\ra.
\end{alignat*}
Therefore, if we plug this in to \eqref{5341} we have
\begin{alignat}{1}
\nonumber & \frac{1}{2}\|\bH_h^{n+1}\|_{S}^2+ \frac{\rho_s}{2}\|\bQ_h^{n+1}\|_{L^2(\Omega_s)}^2  \\
=&\frac{1}{2} \|\bH_h^{n}\|_{S}^2+\frac{\rho_s}{2}  \|\bQ_h^{n}\|_{L^2(\Omega_s)}^2  +\Dt a_s(\bH_h^{n+1/2}, R_h\bg_2^{n+1})  -\Dt T_1(\bQ_h^{n+1/2})  \nonumber\\
&-\frac{\Dt}{2} T_2(\bQ_h^{n+1/2}) - \Dt V_1(\bxi_h)   +\Dt S_2(\bQ_h^{n+1/2})  -\Dt S_3(\bQ_h^{n+1/2}) \ra \nonumber \\
& +\Dt \la  \bg_1^{n+1}, \bQ_h^{n+1/2} - \alpha \Dt \bl \bU_h^{n+1} - \bU_h^{n}, \bQ_h^{n+1/2}\ra - \Dt \la \bLam_h^{n+1}, \bQ_h^{n+1/2}\ra.  \label{errorS1}
\end{alignat}

If we now set  $\bv_h=\bU_h^{n+1}$ in  \eqref{errfluid11} and $\theta_h= P_h^{n+1}$ in \eqref{errfluid22}  we get 

\begin{alignat}{1}
&\frac{\rho_f}{2} \|\bU_h^{n+1}\|_{L^2(\Omega_f)}^2+\frac{\rho_f}{2} \|\bU_h^{n+1}-\bU_h^{n}\|_{L^2(\Omega_f)}^2 
+ \Dt 2\mu \|\eps(\bU_h^{n+1})\|_{L^2(\Omega_f)}^2+ \Dt h^2 \|\nabla P_h^{n+1}\|_{L^2(\Omega_s)}^2 \nonumber \\
=&\frac{1}{2} \|\bU_h^{n}\|_{L^2(\Omega_f)}^2+ \Dt \la \bLam_h^{n+1}, \bU_h^{n+1}\ra  - \Dt S_1(\bU_h^{n+1}) 
 - \Dt V_2(\bU_h) -\Dt V_3(P_h) - \Dt V_4(P_h). \label{errorF1}
\end{alignat}
If we use that 
\begin{equation*}
 -\Dt S_3(\bQ_h^{n+1/2})-\frac{\Dt}{2} T_2(\bQ_h^{n+1/2}) +\Dt \la  \bg_1^{n+1}, \bQ_h^{n+1/2} \ra=\frac{\Dt}{2} T_2(\bQ_h^{n+1/2})
\end{equation*}
and add \eqref{errorS1} and \eqref{errorF1}, we may write the following
\begin{alignat}{1}
 \frac{1}{2} \mathcal{S}_h^{n+1}+ \frac{1}{2} \mathcal{W}_h^{n+1}
 = & \frac{1}{2} \mathcal{S}_h^{n}+ K_1+\cdots+ K_9 + J_2, \label{9151}
\end{alignat}
where 
\begin{alignat*}{3}
K_1:=& -\Dt T_1(\bQ_h^{n+1/2}), \quad & K_2:= \frac{\Dt}{2} T_2(\bQ_h^{n+1/2}), &\qquad K_3: =-\Dt S_3(\bQ_h^{n+1/2}),  \\
K_4:=& - \Dt S_1(\bU_h^{n+1}) , \quad & K_5:=\Dt a_s( \bH_h^{n+1/2}, R_h^s \bg_2^{n+1}), &\qquad  K_6:= -\Dt V_3(P_h^{n+1}),\\
 K_7 :=& -\Dt V_4( P_h^{n+1}), \quad & K_8:= -\Dt V_1(\bQ_h^{n+1/2}),& \qquad  K_9 :=-\Dt V_2(\bU_{h}^{n+1}).
\end{alignat*}
and
\begin{alignat*}{1}
J_2 := &- \alpha \Dt \bl \bU_h^{n+1} - \bU_h^{n},\bQ_h^{n+1/2}\ra - \Dt \la \bLam_h^{n+1}, \bQ_h^{n+1/2}\ra + \Dt \la \bLam_h^{n+1}, \bU_h^{n+1}\ra.
\end{alignat*}

Using \eqref{erraux1021} we see that
\begin{alignat*}{1}
J_2 =&- \alpha \Dt \bl \bU_h^{n+1} - \bU_h^{n},\pdt \bH_h^{n+1}\ra  - \Dt \la \bLam_h^{n+1}, \pdt \bH_h^{n+1}-\bU_h^{n+1}\ra  \\
& + \alpha \Dt \bl \bU_h^{n+1} - \bU_h^{n}, R_h^s \bg_2^{n+1}\ra+ \Dt \la \bLam_h^{n+1}, R_h^s \bg_2^{n+1}\ra. 
\end{alignat*}
Using \eqref{erraux1011} gives
\begin{alignat*}{1}
J_2 =&- \alpha \Dt \bl \bU_h^{n+1} - \bU_h^{n},\pdt \bH_h^{n+1}\ra  - \frac{{\Dt}}{\alpha} \la \bLam_h^{n+1}, \bLam_h^{n+1}-\bLam_h^n\ra,  \\
& + \alpha \Dt \bl \bU_h^{n+1} - \bU_h^{n}, R_h^s \bg_2^{n+1}\ra+ \Dt \la \bLam_h^{n+1}, R_h^s \bg_2^{n+1}+\frac{\bg_{1}^{n+1}}{\alpha}\ra .
\end{alignat*}

We can then use \eqref{eq131} to get 
\begin{alignat*}{1}
- \alpha \Dt \bl \bU_h^{n+1} - \bU_h^{n},\pdt \bH_h^{n+1}\ra=&  -\frac{\alpha \Dt}{2}(\|\bU_h^{n+1}\|_{L^2(\Sigma)}^2  - \|\bU_h^{n}\|_{L^2(\Sigma)}^2) \\
& -\frac{\alpha \Dt}{2}( \|\pdt \bH_h^{n+1} - \bU_h^n \|_{L^2(\Sigma)}^2 - \|\pdt \bH_h^{n+1} - \bU_h^{n+1}\|_{L^2(\Sigma)}^2 ). \\
 - \frac{{\Dt}}{\alpha} \la \bLam_h^{n+1}, \bLam_h^{n+1}-\bLam_h^n\ra=& - \frac{\Dt}{2\alpha}(\|\bLam_{h}^{n+1}\|_{L^2(\Sigma)}^2  - \| \bLam_h^{n}\|_{L^2(\Sigma)}^2 + \|\bLam_h^{n+1} - \bLam_h^n \|_{L^2(\Sigma)}^2 ).
\end{alignat*}

Next, we note from \eqref{erraux1011} that we have
\begin{alignat*}{1}
\frac{1}{2\alpha} \| \bLam_h^{ n+1}-\bLam_h^{n}\|_{L^2(\Sigma)}^2 =&\frac{\alpha}{2} \| \partial_{\Dt} \bH_h^{n+1} - \bU_h^{n+1}\|_{L^2(\Sigma)}^2+  \frac{1}{2\alpha}\|\bg_{1}^{n+1}\|_{L^2(\Sigma)}^2 \\
&+\la \partial_{\Dt} \bH_h^{n+1} - \bU_h^{n+1},\bg_1^{n+1} \ra + \la \bU_h^{n+1}- \bU_h^{n},\bg_1^{n+1} \ra .
\end{alignat*}

Therefore, combining the above equations we have
\begin{alignat*}{1}
J_2 := & -\frac{\alpha \Dt}{2}(\|\bU_h^{n+1}\|_{L^2(\Sigma)}^2  - \|\bU_h^{n}\|_{L^2(\Sigma)}^2 + \|\pdt \bH_h^{n+1} - \bU_h^n \|_{L^2(\Sigma)}^2) \\
& - \frac{\Dt}{2\alpha}(\|\bLam_{h}^{n+1}\|_{L^2(\Sigma)}^2  - \| \bLam_h^{n}\|_{L^2(\Sigma)}^2) - \frac{\Dt}{2\alpha}\|\bg_{1}^{n+1}\|_{L^2(\Sigma)}^2 -\Dt \la \partial_{\Dt} \bH_h^{n+1} - \bU_h^{n},\bg_1^{n+1} \ra\\
&+ \Dt \la \bLam_h^{n+1}, R_h^s \bg_2^{n+1} +\frac{\bg_{1}^{n+1}}{\alpha}\ra +  \Dt \bl \bU_h^{n+1} - \bU_h^{n}, \alpha R_h^s \bg_2^{n+1} + \bg_1^{n+1}\ra.
\end{alignat*}

Substituting this into \eqref{9151} we arrive at
\begin{alignat}{1}
& \frac{1}{2}\mathcal{S}_h^{n+1} + \frac{1}{2}\mathcal{E}_h^{n+1} + \frac{1}{2}\mathcal{W}_h^{n+1}  + \frac{1}{2}\mathcal{Z}_h^{n+1}+  \frac{\Dt}{2\alpha}\|\bg_{1}^{n+1}\|_{L^2(\Sigma)}^2 \nonumber \\
&=  \frac{1}{2}\mathcal{S}_h^{n} + \frac{1}{2}\mathcal{E}_h^{n}+ \sum_{i=1}^{12} K_i ,\label{8611}
\end{alignat}
where 
\begin{alignat*}{1}
K_{10}:=&  -\Dt \la \partial_{\Dt} \bH_h^{n+1} - \bU_h^{n},\bg_1^{n+1} \ra, \quad K_{11}:= \Dt \la \bLam_h^{n+1}, R_h^s \bg_2^{n+1} +\frac{\bg_{1}^{n+1}}{\alpha}\ra , \, \\
K_{12}:=&\Dt \bl \bU_h^{n+1} - \bU_h^{n}, \alpha R_h^s \bg_2^{n+1} + \bg_1^{n+1}\ra. 
\end{alignat*}

Before proceeding to bound all the terms, we apply \eqref{erraux1021} to $K_8$ and obtain 
\begin{alignat*}{1}
K_{8} =& -\Dt a_s ((R_h^s - I) \bfeta^{n+1/2}, \pdt \bH_h^{n+1}) + \Dt a_s ((R_h^s - I) \bfeta^{n+1/2}, R_h^s \bg_{2}^{n+1}).
\end{alignat*}
One can verify the following discrete integration by parts 
 \begin{alignat*}{1}
 -\Dt a_s ((R_h^s - I) \bfeta^{n+1/2}, \pdt \bH_h^{n+1}) =&B^{n+1}+ \Dt a_{s}((R_h^s-I)\pdt \bfeta^{n+1}, \bH_h^{n+1/2} ).
 \end{alignat*}
 where
 \begin{equation*}
B^{n+1}:= a_s ((R_h^s-I)\bfeta^{n}, \bH_h^n)-a_s ((R_h^s-I)\bfeta^{n+1}, \bH_h^{n+1} ).
\end{equation*}

Thus, we have
\begin{alignat*}{1}
K_8 =& B^{n+1}+   \Dt a_{s}((R_h^s-I)\pdt \bfeta^{n+1}, \bH_h^{n+1/2} )+\Dt a_s ((R_h^s - I) \bfeta^{n+1/2},R_h^s \bg_{2}^{n+1}).
\end{alignat*}

Now we bound each $K_i$ for $1 \le i \le 12 $. The number $\delta>0$ would be chosen sufficiently small later. Using the Cauchy-Schwarz inequality we get 
\begin{alignat*}{2}
K_1 \le  \Dt  \|\pdt R_h^s \bq^{n+1}-\pt \bq^{n+1/2}\|_{L^2(\Omega_s)} \|\bQ_h^{n+1/2}\|_{L^2(\Omega_s)}.
\end{alignat*}
If we appy the geometric-arithmetic mean inequality we get 
\begin{alignat*}{2}
K_1 \le  \de \frac{\rho_s\Dt}{T} ( \|\bQ_h^{n+1}\|_{L^2(\Omega_s)}^2 + \|\bQ_h^{n}\|_{L^2(\Omega_s)}^2)+ C(\de) T\Dt \rho_s \|\pdt R_h^s \bq^{n+1}-\pt \bq^{n+1/2}\|_{L^2(\Omega_s)}^2.
\end{alignat*}

To bound $K_2$ we use \eqref{erraux1021}
\begin{alignat*}{2}
K_2=  -\frac{\Dt}{2} ( T_2(\pdt \bH_h^{n+1}-\bU_h^n)+ T_2(\bU_h^n)+ T_2(R_h^s \bg_2^{n+1}).
\end{alignat*}
Therefore, afer using  the Cauchy-Schwarz inequality we have 
\begin{alignat*}{2}
K_2 \le \frac{\Dt}{2}  \|\blam^{n+1}-\blam^n\|_{L^2(\Sigma)} ( \|\pdt \bH_h^{n+1}-\bU_h^n\|_{L^2(\Sigma)}+ \|\bU_h^n\|_{L^2(\Sigma)} + \|R_h^s \bg_2^{n+1}\|_{L^2(\Sigma)}).
\end{alignat*}
Hence, using the geometric-arithmetic mean inequality we see that
\begin{alignat*}{2}
K_2 \le&  \de \Big(\frac{\Dt^2 \alpha}{T}  \|\bU_h^n\|_{L^2(\Sigma)}^2+ \Dt \alpha  \|\pdt \bH_h^{n+1}-\bU_h^n\|_{L^2(\Sigma)}^2  \Big) \\
& +\frac{C(\de)}{\alpha}(\Dt+T) \|\blam^{n+1}-\blam^n\|_{L^2(\Sigma)}^2+ C(\de) \alpha \Dt \|R_h^s \bg_2^{n+1}\|_{L^2(\Sigma)}^2.
\end{alignat*}
Similarly,  we have 
\begin{alignat*}{1}
K_3 \le \Dt  \alpha \|  R_h^f \bu^{n+1}- R_h^f \bu^{n}\|_{L^2(\Sigma)} ( \|\pdt \bH_h^{n+1}-\bU_h^n\|_{L^2(\Sigma)}+ \|\bU_h^n\|_{L^2(\Sigma)} + \|R_h^s \bg_2^{n+1}\|_{L^2(\Sigma)}),
\end{alignat*}
and
\begin{alignat*}{2}
K_3 \le& \de \Big(\frac{\Dt^2 \alpha}{T}  \|\bU_h^n\|_{L^2(\Sigma)}^2+ \Dt \alpha  \|\pdt \bH_h^{n+1}-\bU_h^n\|_{L^2(\Sigma)}^2  \Big) \\
& +C(\delta) \alpha(\Dt+T)\|  R_h^f \bu^{n+1}- R_h^f\bu^{n}\|_{L^2(\Sigma)} + C(\delta) \alpha\Dt \|R_h^s \bg_2^{n+1}\|_{L^2(\Sigma)}^2.
\end{alignat*}

Following this same process, we have
\begin{alignat*}{1}
K_4 &\le \de \frac{\rho_f\Dt}{T}\|\bU_h^{n+1}\|_{L^2(\Omega_f)}^2+  C(\de) \rho_f \Dt T \|\pdt R_h^f \bu^{n+1}- \pt \bu^{n+1}\|_{L^2(\Omega_f)}^2, \\
K_5 &\le  \de \frac{\Dt}{T}( \| \bH_h^{n+1}\|_{S}^2 +\| \bH_h^{n}\|_{S}^2)+ C(\de) T\Dt  \|R_h^s \bg_2^{n+1}\|_{S}^2, \\
K_7 & \le \de \Dt h^{2}\| \nabla P_h ^{n+1}\|_{L^2(\Omega_f)}^2 +C(\de) \Dt h^{2} \|\nabla S_h p^{n+1} \|_{L^2(\Omega_f)}^2 ,\\
K_8 &\le  \de \frac{\Dt}{T} (\| \bH_h^{n}\|_{S}^2+  \| \bH_h^{n+1}\|_{S}^2)+  C \Dt \|R_h^s \bg_2^{n+1}\|_{S}^2 \\
&\quad+C(\de) \Dt T \|(R_h^s-I)\pdt \bfeta^{n+1}\|_{S}^2 + C \Dt \|(R_h^s - I) \bfeta^{n+1/2}\|_{S}^2+B^{n+1}, \\
 K_{10} &\le \de \alpha \Dt \|\pdt \bH_h^{n+1} - \bU_h^{n} \|_{L^2(\Sigma)}^2 + \frac{C(\de)\Dt}{\alpha} \|\bg_{1}^{n+1}\|_{L^2(\Sigma)}^2 , \\
K_{11} &\le \de  \frac{(\Dt)^{2}}{T \alpha}\|\bLam_h^{n+1}\|_{L^2(\Sigma)}^2 + \frac{C(\de)T}{\alpha}\|\alpha R_h^s \bg_2^{n+1} + \bg_{1}^{n+1}\|_{L^2(\Sigma)}^2 ,\\
K_{12} & \le \de \frac{(\Dt)^2 \alpha}{T} (\|\bU_h^{n+1} \|_{L^2(\Sigma)}^2 + \| \bU_h^{n} \|_{L^2(\Sigma)}^2) + \frac{C(\de) T}{\alpha}\|\alpha R_h^s \bg_2^{n+1} + \bg_{1}^{n+1}\|_{L^2(\Sigma)}^2 .
\end{alignat*}

To estimate $K_6$, we perform integration by parts and proceed as before. Thus, 
\begin{alignat*}{1}
K_6 &= \Dt (R_h^f \bu^{n+1} - \bu^{n+1}, \nabla P_h^{n+1})_f - \Dt \bl (R_h^f \bu^{n+1} - \bu^{n+1})\cdot \bn, P_h^{n+1}\br \\
&\leq \Dt \|R_h^f \bu^{n+1} - \bu^{n+1}\|_{L^2(\Omega_f)}\|\nabla P_h\|_{L^2(\Omega_f)} + \Dt \|R_h^f \bu^{n+1} - \bu^{n+1}\|_{L^2(\Sigma)}\|P_h^{n+1}\|_{L^2(\Sigma)} \\
&\leq C\Dt h^2\|\bu^{n+1}\|_{H^2(\Omega_f)}\|\nabla P_h^{n+1}\|_{L^2(\Omega_f)} + C\Dt h^2\|\bu^{n+1}\|_{H^3(\Omega_f)}\|\nabla P_h^{n+1}\|_{L^2(\Omega_f)},
\end{alignat*}
where the last step follows from applying \eqref{eq2S} and using the trace inequality \eqref{trace} on $P_h^{n+1}$. We also used Poincare's inequality. Thus, applying this result along with Young's inequality, we have 
\begin{alignat*}{1}
K_6 &\leq  \de \Dt h^2\|\nabla P_h^{n+1}\|_{L^2(\Omega_f)}^2+ C(\de) \Dt h^2 \|\bu^{n+1}\|_{H^3(\Omega_f)}^2.
\end{alignat*}

Finally, for $K_{9}$, we can easily show that
\begin{alignat*}{1}
K_{9} \le & \de \Dt\mu \|\eps(\bU_h^{n+1})\|_{L^2(\Omega_f)}^2 +  C(\de) \Dt \mu \|\eps(R_h^f \bu^{n+1} - \bu^{n+1})\|_{L^2(\Omega_f)}^2 +  C(\de) \frac{\Dt}{\mu} \|S_h p^{n+1} - p^{n+1}\|_{L^2(\Omega_f)}^2.
\end{alignat*}

Combining the above inequalities, we have
\begin{alignat*}{1}
    \sum_{1\le i \le12} K_i \le  &  12 \delta \frac{\Dt}{T} (\mathcal{S}_h^{n+1}+\mathcal{S}_h^n+\mathcal{E}_h^{n+1}+\mathcal{E}_h^{n} ) +12 \de(\mathcal{Z}_h^{n+1}+\mathcal{W}_h^{n+1})+ C(\de) G^{n+1} + B^{n+1},
\end{alignat*}
where $G^{n+1}:=\sum_{i=1}^{14}  G_i^{n+1}$ such that
\begin{alignat*}{2}
G_1^{n+1} :=& T \Dt \rho_s  \|\pdt R_h^s \bq^{n+1}-\pt \bq^{n+1/2}\|_{L^2(\Omega_s)}^2, \quad &&  G_2^{n+1}:=  \frac{1}{\alpha}(\Dt + T)  \|\blam^{n+1}-\blam^n\|_{L^2(\Sigma)}^2,  \\
G_3^{n+1}:=& \alpha(\Dt + T) \|  R_h^f \bu^{n+1}- R_h^f \bu^{n}\|_{L^2(\Sigma)}^2, \quad &&  G_4^{n+1}:=  \Dt\alpha \|R_h^s \bg_2^{n+1}\|_{L^2(\Sigma)}^2,\\
G_5^{n+1}:= & \rho_f \Dt T \|\pdt R_h^f \bu^{n+1}- \pt \bu^{n+1}\|_{L^2(\Omega_f)}^2,   \quad && G_6^{n+1}:=  (\Dt + T\Dt)  \|R_h^s \bg_2^{n+1}\|_{S}^2, \quad\\
G_7^{n+1}:=& \frac{\Dt}{\alpha} \|\bg_1^{n+1}\|_{L^2(\Sigma)}^2,  \quad && G_8^{n+1}:= \frac{T}{\alpha} \|  \alpha  R_h^s  \bg_2^{n+1} +\bg_{1}^{n+1}  \|_{L^2(\Sigma)}^2, \\
G_9^{n+1} :=& \frac{\Dt}{\mu}\|S_h p^{n+1} - p^{n+1} \|_{L^2(\Omega_f)}^2,  \quad && G_{10}^{n+1}:=  \mu \Dt \|\eps(R_h^f \bu^{n+1} - \bu^{n+1}) \|_{L^2(\Omega_f)}^2, \\
G_{11}^{n+1}:=& \Dt h^{2} \|\nabla S_h p^{n+1}\|_{L^2(\Omega_f)}^2,  \quad && G_{12}^{n+1}:= \Dt h^2 \|\bu^{n+1}\|_{H^3(\Omega_f)}^2, \\
G_{13}^{n+1}:= & \Dt\|(R_h^s - I) \bfeta^{n+1/2}\|_{S}^2,  \quad && G_{14}^{n+1}:=  \Dt T \|(R_h^s - I)\pdt \bfeta^{n+1}\|_{S}^2.
\end{alignat*}

Therefore, if we take the sum of \eqref{8611} from $1$ to $M \leq N$, we have
\begin{alignat}{1}
& \frac{1}{2}(\mathcal{S}_h^{M} + \mathcal{E}_h^{M}) + \frac{1}{2}\sum_{m=1}^{M}\bigg(\mathcal{W}_h^{m}  +\mathcal{Z}_h^{m}\bigg) \nonumber \\
\le&  \frac{1}{2}(\mathcal{S}_h^{0} + \mathcal{E}_h^{0}) + 24 \de \max_{0 \le  m \le N} ( \frac{1}{2}\mathcal{S}_h^{m} + \frac{1}{2}\mathcal{E}_h^{m}) + 12 \de \sum_{m=1}^{M}\bigg( \mathcal{W}_h^{m} + \mathcal{Z}_h^{m}\bigg)  + \sum_{m=1}^M B^m  +C(\de) \sum_{m=1}^{M} G^{m}. \label{almostComplete}
\end{alignat}
We can use the telescoping sum to get
\begin{alignat*}{1}
\sum_{m=1}^{M} B^{m} &= a_s ((R_h^s-I)\bfeta^{0}, \bH_h^0)- a_s ((R_h^s-I)\bfeta^{M}, \bH_h^M).
\end{alignat*}

We may then bound this term using Cauchy-Schwartz and Young's inequality,  giving us
\begin{alignat*}{1}
\sum_{m=1}^{M} B^{m} \leq& \de ( \|\bH_h^M\|_S^2 + \|\bH_h^0 \|_S^2 ) + C(\de) (\|(R_h - I)\bfeta^{M}\|_{S}^2 + \|(R_h - I)\bfeta^{0}\|_{S}^2).
\end{alignat*}

If we take $\de$ small enough,  say $24 \de \le 1/2$, we obtain after using \eqref{almostComplete}
\begin{alignat}{1}
& \frac{1}{4}\max_{1 \le m \le N} (\mathcal{S}_h^{m} + \mathcal{E}_h^{m}) + \frac{1}{4}\sum_{m=1}^{N}\bigg(\mathcal{W}_h^{m}  +\mathcal{Z}_h^{m}\bigg) \nonumber \\
\le&  \mathcal{S}_h^{0} + \mathcal{E}_h^{0} +    C\max_{0 \le  m \le N} \|(R_h - I)\bfeta^{m}\|_{S}^2 + C\sum_{m=1}^{N} G^{m}.
\label{CompleteComplete}
\end{alignat}

 Now we proceed to bound  $\sum_{m=1}^N G_i^{m}$  for every $1 \le i \le 14$. Using \eqref{tt1} we have
 \begin{alignat*}{1}
 \sum_{m=1}^N G_1^{m} \le C T \rho_s   \left(h^{4} \|\pt \bq\|_{L^2(0, T; H^2(\Omega_s))}^2 +\Dt^4  \|\pt^3 \bq\|_{L^2(0, T; L^2(\Omega_s))}^2\right).
 \end{alignat*}

Using \eqref{tt2} we get 
  \begin{alignat*}{1}
  \sum_{m=1}^N G_2^{m}\le C  \left(\frac{1}{\alpha}(\Dt+T)\right) \Dt \bigg( \mu^2 \|\pt \bu\|_{L^2(0,T; H^2(\Omega_f))}^2 + \|\pt p\|_{L^2(0,T; H^1(\Omega_f))}^2\bigg) .
  \end{alignat*}

  If we apply \eqref{tt4} we obtain 
  \begin{alignat*}{1}
  \sum_{m=1}^N G_3^{m}  \le& C  \alpha (\Dt+T) \Dt \|\pt \bu\|_{L^2(0, T; H^1(\Omega_f))}^2 .
  \end{alignat*} 

From \eqref{tt3}, it follows that
\begin{alignat*}{1}
  \sum_{m=1}^N G_4^{m}\le C \Dt^4 \alpha   \|\pt^3 \bfeta \|_{L^2(0,T; H^1(\Omega_s))}^2.
\end{alignat*}

 We can use \eqref{tt5} to obtain
 \begin{alignat*}{1}
  \sum_{m=1}^N G_5^{m} \le  C T \rho_f \left(   h^{4} \|\pt \bu\|_{L^2(0, T; H^2(\Omega_f))}^2 + \Dt^2  \|\pt^2 \bu\|_{L^2(0, T; L^2(\Omega_f))}^2  \right).
 \end{alignat*}

As a result of \eqref{tt6} and \eqref{tt7} we have
 \begin{alignat*}{1}
  \sum_{m=1}^N G_6^{m} &\le C  (1+T) \Dt^4 \|\pt^3 \bfeta \|_{L^2(0, T; H^2(\Omega_s))}^2,\\
\sum_{m=1}^N G_7^{m} &\le C \frac{\Dt^2}{\alpha} \bigg( \mu^2\|\pt \bu\|_{L^2(0,T; H^2(\Omega_f))}^2  + \|\pt p\|_{L^2(0,T; H^1(\Omega_f))}^2 + \alpha^2 \|\pt^2 \bfeta \|_{L^2(0,T; H^1(\Omega_f))}^2 \bigg),\\
\sum_{m=1}^N G_8^{m} & \le C T \Dt^3\alpha \|\pt^3 \bfeta \|_{L^2(0, T; H^1(\Omega_s))}^2 \\
&\quad +C\frac{\Dt T}{\alpha} \bigg(  \mu^2\|\pt \bu\|_{L^2(0, T; H^2(\Omega_f))}^2 +\|\pt p\|_{L^2(t_n, t_{n+1}; H^1(\Omega_f))}^2  + \alpha^2 \|\pt^2 \bfeta \|_{L^2(0, T; H^1(\Omega_f))}^2 \bigg).
\end{alignat*}

Proceeding in the same manner, from \eqref{tt12} - \eqref{tt15} we have
\begin{alignat*}{1}
 \sum_{m=1}^N G_9^{m} &\le C \frac{h^4}{\mu} \bigg(\Dt^2\|\pt p\|_{L^2(0,T; H^2(\Omega_f))}^2 + h^4\|p\|_{L^2(0,T; H^2(\Omega_f))}^2\bigg),\\
\sum_{m=1}^N G_{10}^{m}&\le C \mu \bigg(\Dt^2 h^2\|\pt \bu\|_{L^2(0,T; H^2(\Omega_f))}^2 + h^2\|\bu \|_{L^2(0,T; H^2(\Omega_f))}^2\bigg),\\
\sum_{m=1}^N G_{11}^{m} &\le C h^2\bigg(\Dt^2 \|\pt p\|_{L^2(0,T; H^2(\Omega_f))}^2 + \|p\|_{L^2(0,T; H^2(\Omega_f))}^2\bigg),\\
\sum_{m=1}^N G_{12}^{m} &\le Ch^2(\Dt^2 + 1) \|\bu^{n+1}\|_{L^{2}(0,T;H^{3}(\Omega_f))}^2, \\
\sum_{m=1}^N G_{13}^{m} &\le C h^2 \bigg( \Dt^2 \|\pt \bfeta\|_{L^2(0,T; H^2(\Omega_s))}^2+  \| \bfeta\|_{L^2(0,T; H^2(\Omega_s))}^2\bigg) , \\
\sum_{m=1}^N G_{14}^m & \le C h^2 T\|\pt \bfeta\|_{L^2(0,T; H^2(\Omega_s))}^2.
\end{alignat*}

We can also have  the bound 
 \begin{equation*}
  \max_{0 \le  m \le N} \|(R_h - I)\bfeta^{m}\|_{S}^2 \le C  h^2  \max_{0 \le  m \le N} \|\bfeta(t_m)\|_{H^2(\Omega_s)}^2 \le C h^2 \|\bfeta\|_{L^\infty(0,T;H^2(\Omega_s))}^2. 
\end{equation*}

Thus, combining the terms we get 
\begin{equation*}
\max_{0 \le  m \le N} \|(R_h - I)\bfeta^{m}\|_{S}^2 + \sum_{m=1}^{N} G^{m} \le C Y \Psi. 
\end{equation*}
Plugging this into \eqref{CompleteComplete} completes the proof. 

\end{proof}

\section{Numerical experiments}

The purpose of this section is to illustrate, via numerical experiments, the performance of the loosely coupled scheme given by Algorithm~\ref{alg:fullRR}. 
We consider the well-known pressure wave propagation example (see, e.g., \cite[Section~6.1.1]{fernandez-mullaert-vidrascu-13b}). In \eqref{eq:fluid}-\eqref{eq:coupling}, we have   $\Omega_{f} = [0,L]\times [0,R]$,   
 $\Om_{s} = [0,L]\times [R,R+\epsilon]$, $\Sigma = [0,L]\times \{R\}$, $L=6$, $R=0.5$ and $\epsilon = 0.1 $. All the units are given in the CGS system. 
 At the left fluid boundary $x=0$ we impose a sinusoidal pressure of maximal amplitude 
 $ 2\times 10^4$ during $5\times 10^{-3}$ s,   corresponding to half a period. Free traction is enforced at $x=L$ 
 and a symmetry condition on the bottom wall. 
Transverse membrane effects in the solid are included through a zeroth-order term $c_0 \dep$ in \eqref{eq:solid}$_1$. Zero displacement  and zero traction are respectively
 enforced on the solid later and upper boundaries.
 The fluid physical parameters are  $ \rho^{\rm f}=1$ and $\mu=0.035$. For the solid we have 
 $ \rho^{\rm s}= 1.1$,  $L_1=1.15 \cdot 10^6$, $L_2=1.7 \cdot 10^6$ and $c_0 = 4 \cdot 10^6$. 
A multiplying coefficient of  $10^{-3}/\mu$ is applied to the Brezzi-Pitk\"aranta pressure stabilization method. 
 All the simulations have been performed with \texttt{FreeFem++} (see \cite{freefem}).

 \begin{figure}[!h]
 \centering
 \includegraphics[width=0.8\linewidth]{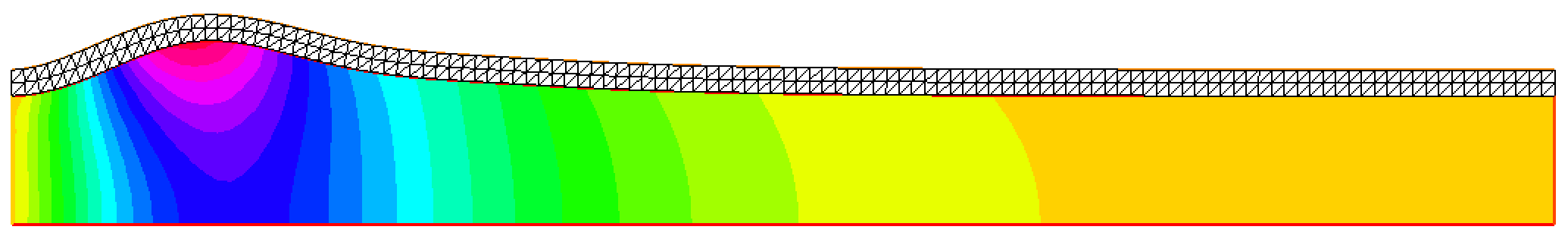}\\ 
 \includegraphics[width=0.8\linewidth]{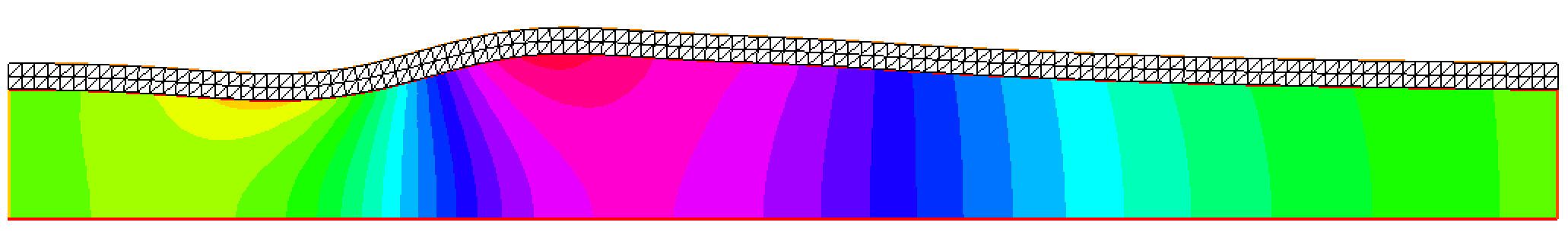}\\ 
 \includegraphics[width=0.8\linewidth]{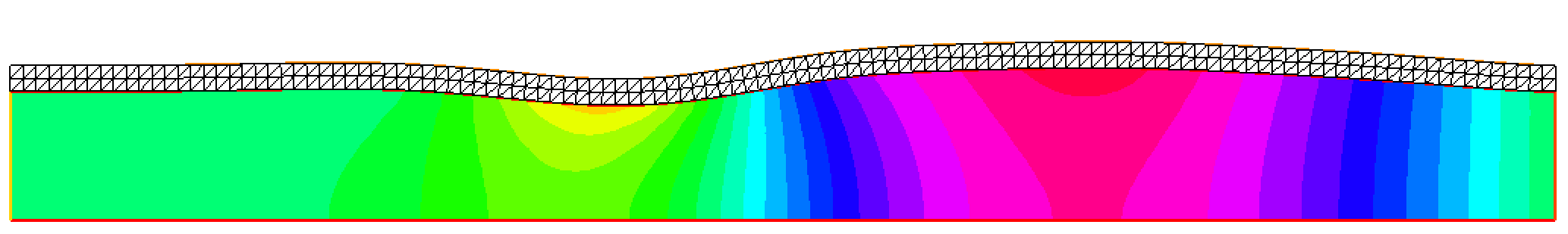}
 \caption{Snapshots of the fluid pressure and solid deformation at $t=5\cdot 10^{-3}$, $10^{-2}$ and $1.5\cdot 10^{-2}$ (from top to bottom). 
 Algorithm \ref{alg:fullRR} with $\tau = 2.5\cdot 10^{-4}$, $h = 0.05$ and $\alpha = 500$.}
 \label{fig:snap}
 \end{figure}

Figure \ref{fig:snap} shows some snapshots of the fluid pressure approximation obtained with  Algorithm~\ref{alg:fullRR} for
 $\tau = 2.5\cdot 10^{-4}$, $h = 0.05$ and $\alpha = 500$. For illustration purposes, the fluid and solid domains are 
 displayed in deformed configuration (magnified by a factor 5). The numerical solution remains 
 stable,  in agreement with Lemma~\ref{lem:stab},  and shows a propagating pressure-wave.

\subsection{Accuracy}

In order to asses the accuracy of Algorithm~\ref{alg:fullRR}, a reference solution has been generated using a strongly coupled scheme and 
 a high space-time grid resolution ($h = 3.125 \cdot 10^{-3}$, $\Delta t = 10^{-6}$).  Convergence histories are measured in terms 
of  the relative elastic energy-norm $\|\bfeta_{ref}^N -  \bfeta_h^N\|_S$ at time $t=0.015$, 
 by refining both in time and in space at the same rate, namely,  by taking 
\begin{equation*}\label{eq:parameter1}
	(\Delta t,h)\in  \left\{\left( \frac{5\cdot 10^{-4}}{2^i}, \frac{10^{-1}}{2^i}\right) \right\}_{i=0}^4  .
\end{equation*}
This allows, in particular, to highlight the $h$-uniformity of the error estimate provided in Theorem~\ref{mainthm2}.

Figure \ref{fig:err-time} reports the corresponding convergence histories obtained with Algorithm~\ref{alg:fullRR} with $\alpha=500$ and the strongly coupled scheme. We can clearly see that Algorithm~\ref{alg:fullRR} delivers an overall sub-optimal convergence rate, close to $\mathcal O(\sqrt{h})$. This
 is in agreement with the error estimate provided by Theorem~\ref{mainthm2} with $\Delta t = \mathcal O(h)$. The 
 strongly coupled scheme yields an overall $\mathcal O({h})$  accuracy, as expected.

 \begin{figure}[h!]
	\centering
\includegraphics[width=0.6\linewidth]{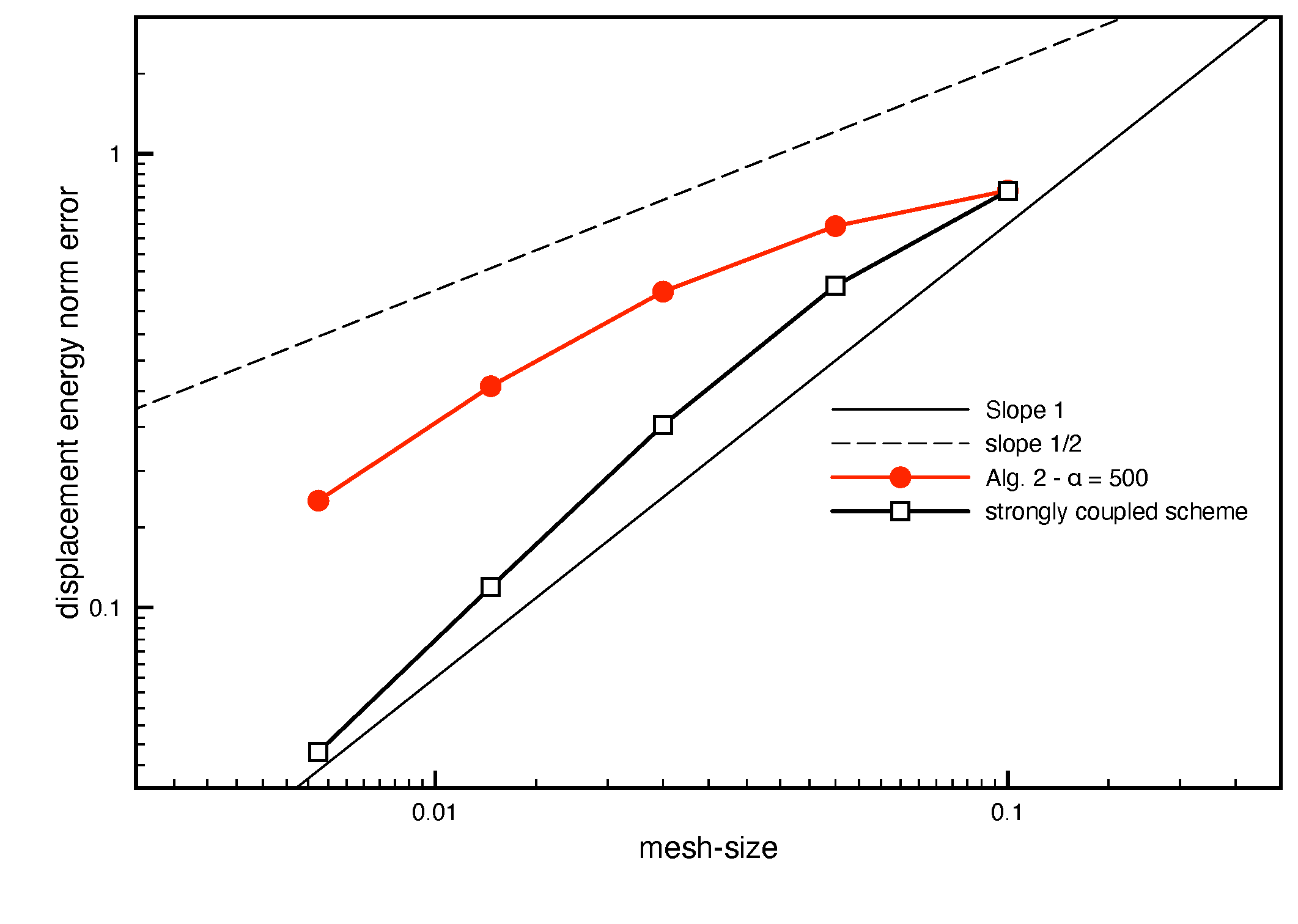} 
\caption{Time-convergence history of the displacement at $t=0.015$, with $\Delta t = \mathcal O (h)$ obtained with Algorithm~\ref{alg:fullRR}  ($\alpha=500$) and the strongly coupled scheme.}
\label{fig:err-time}
\end{figure}
\begin{figure}[h!]
	\centering
\includegraphics[width=0.6\linewidth]{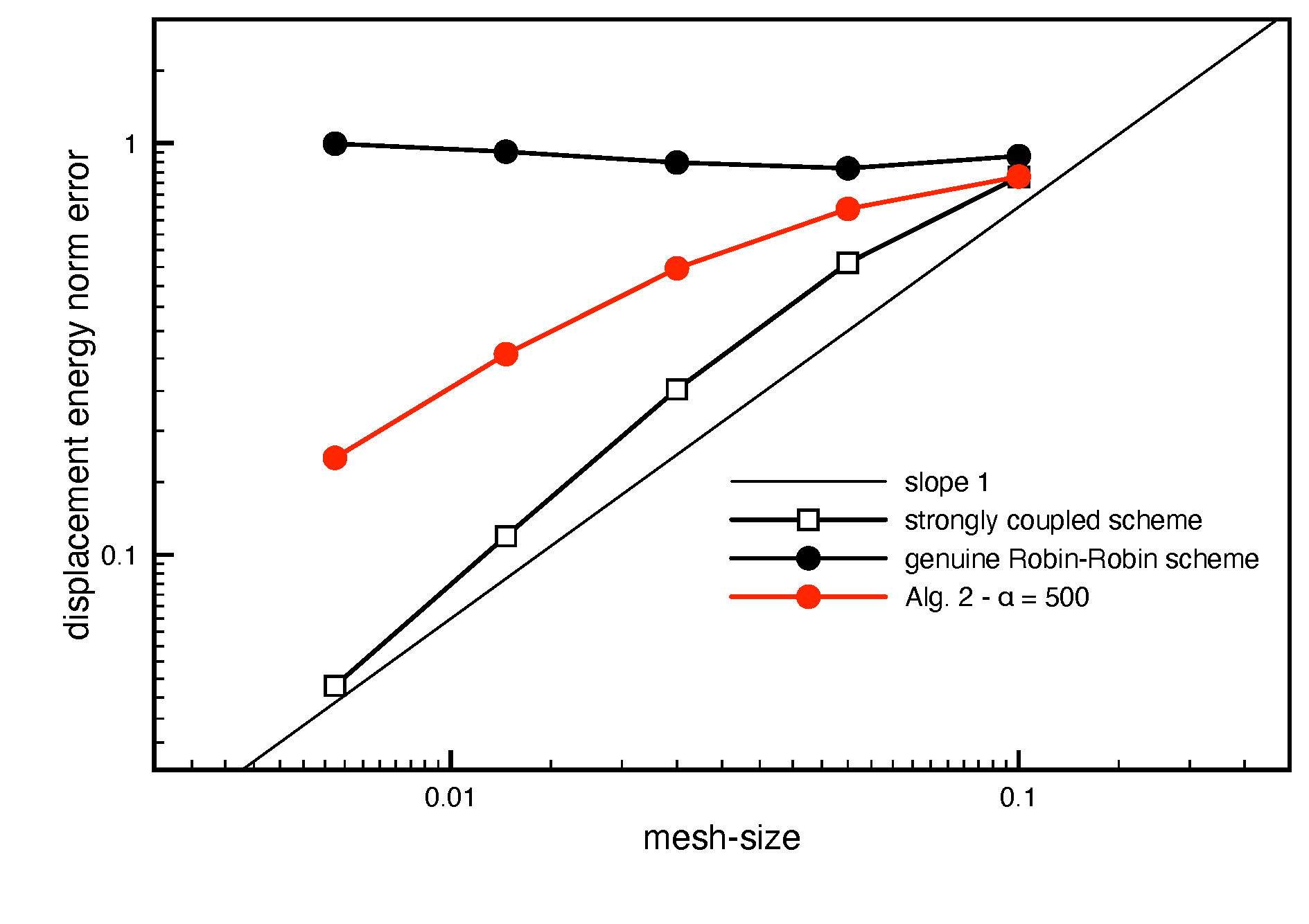} 
\caption{Time-convergence history of the displacement at $t=0.015$, with $\Delta t = \mathcal O (h)$ obtained with Algorithm~\ref{alg:fullRR}  ($\alpha=500$), the strongly coupled scheme and the 
genuine Robin-Robin explicit coupling scheme from  \cite[Algorithm 4]{burman2014explicit}.}
\label{fig:comp-genuine}
\end{figure}
 \begin{figure}[h!]
	\centering
\includegraphics[width=0.6\linewidth]{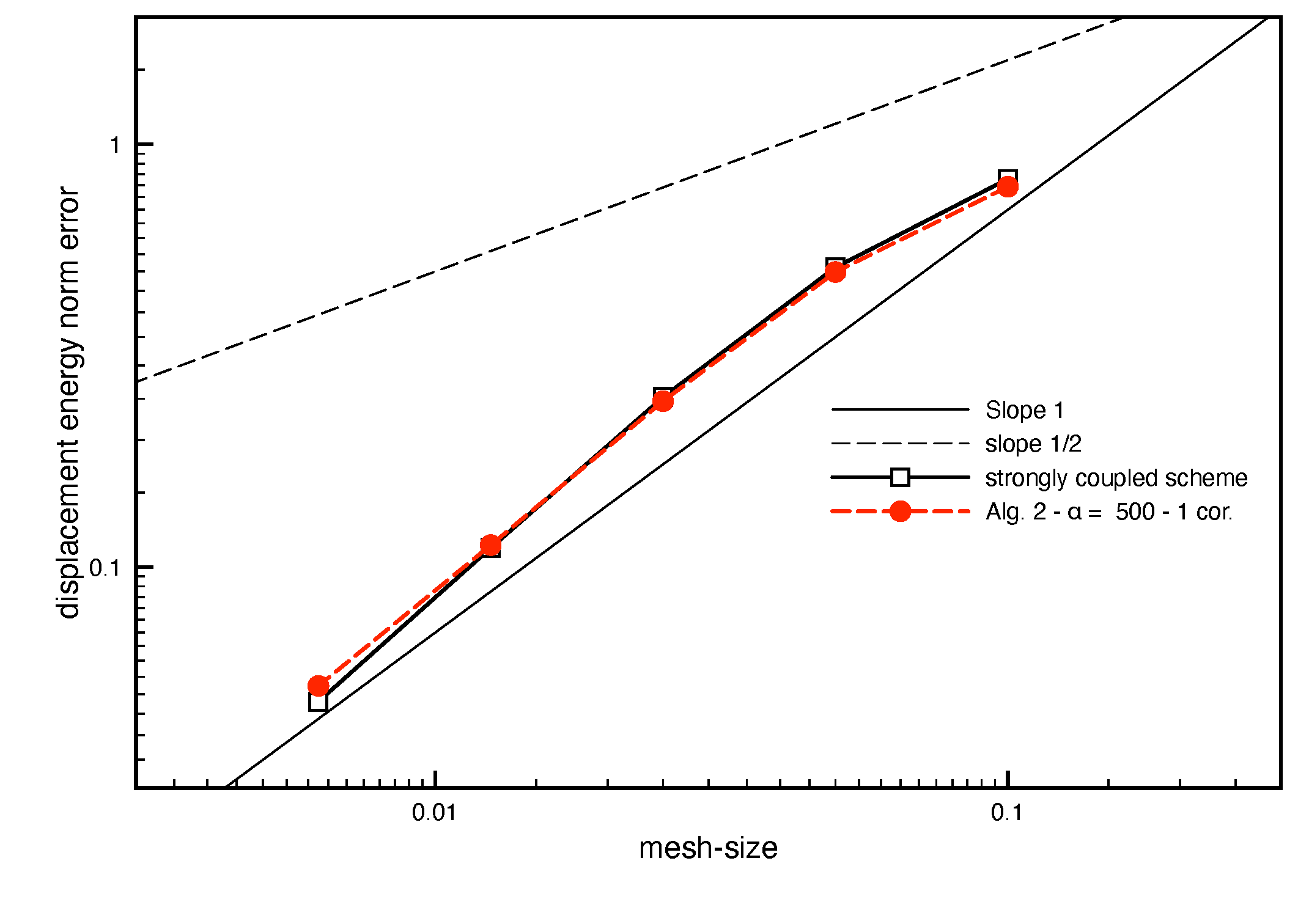} 
\caption{Time-convergence history of the displacement at $t=0.015$, with $\Delta t = \mathcal O (h)$ obtained with  the strongly coupled scheme
and Algorithm~\ref{alg:fullRR} with 1 correction iteration ($\alpha=500$).}
\label{fig:err-time-1c}
\end{figure}
\begin{figure}[h!]
	\centering
\subfigure[$\Delta t = 5\cdot 10^{-4}$, $ h= 0.1$.]{\includegraphics[width=0.48\linewidth]{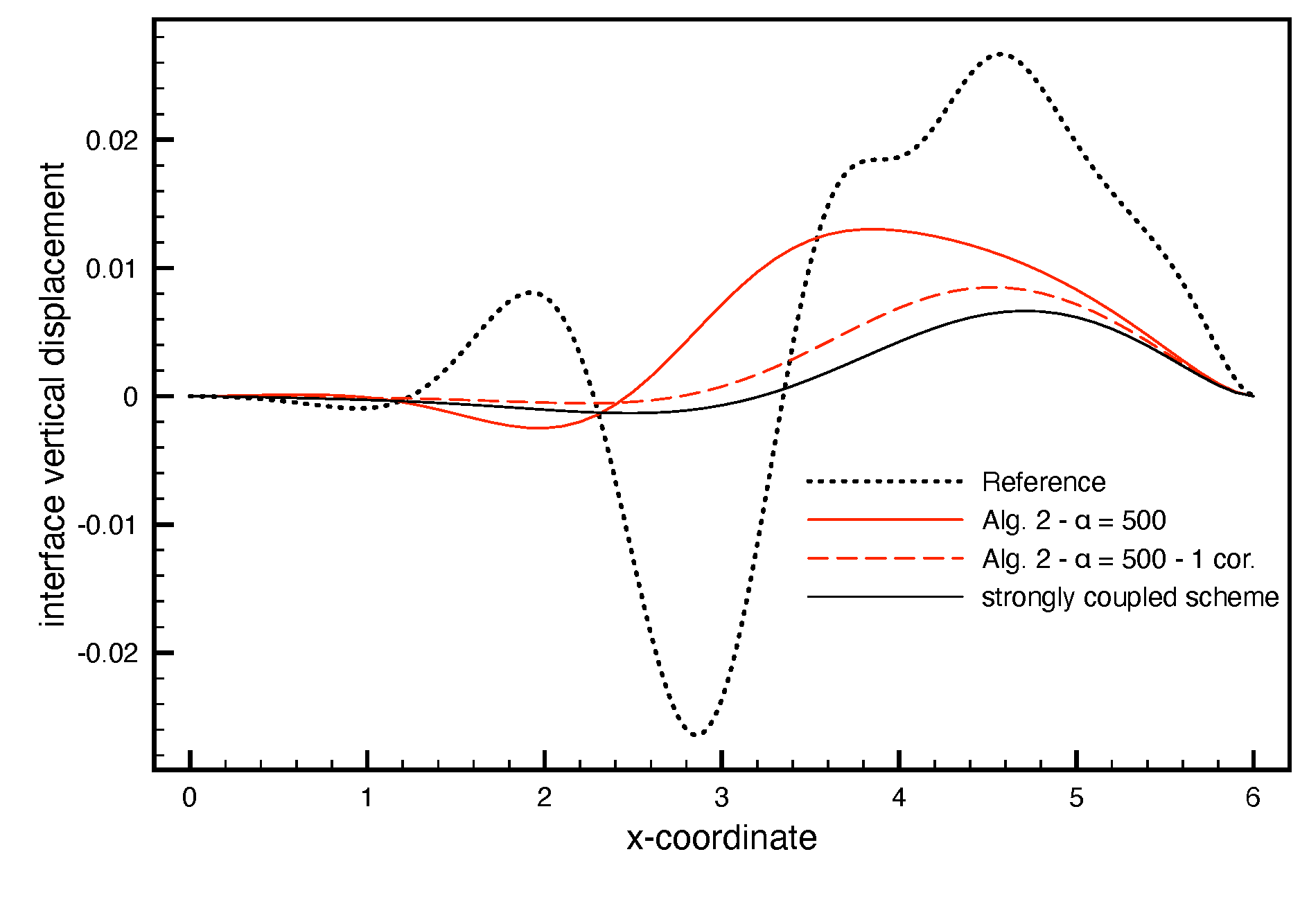} }\,\, 
\subfigure[$\Delta t = 2.5\cdot 10^{-4}$, $ h= 0.05$.]{\includegraphics[width=0.48\linewidth]{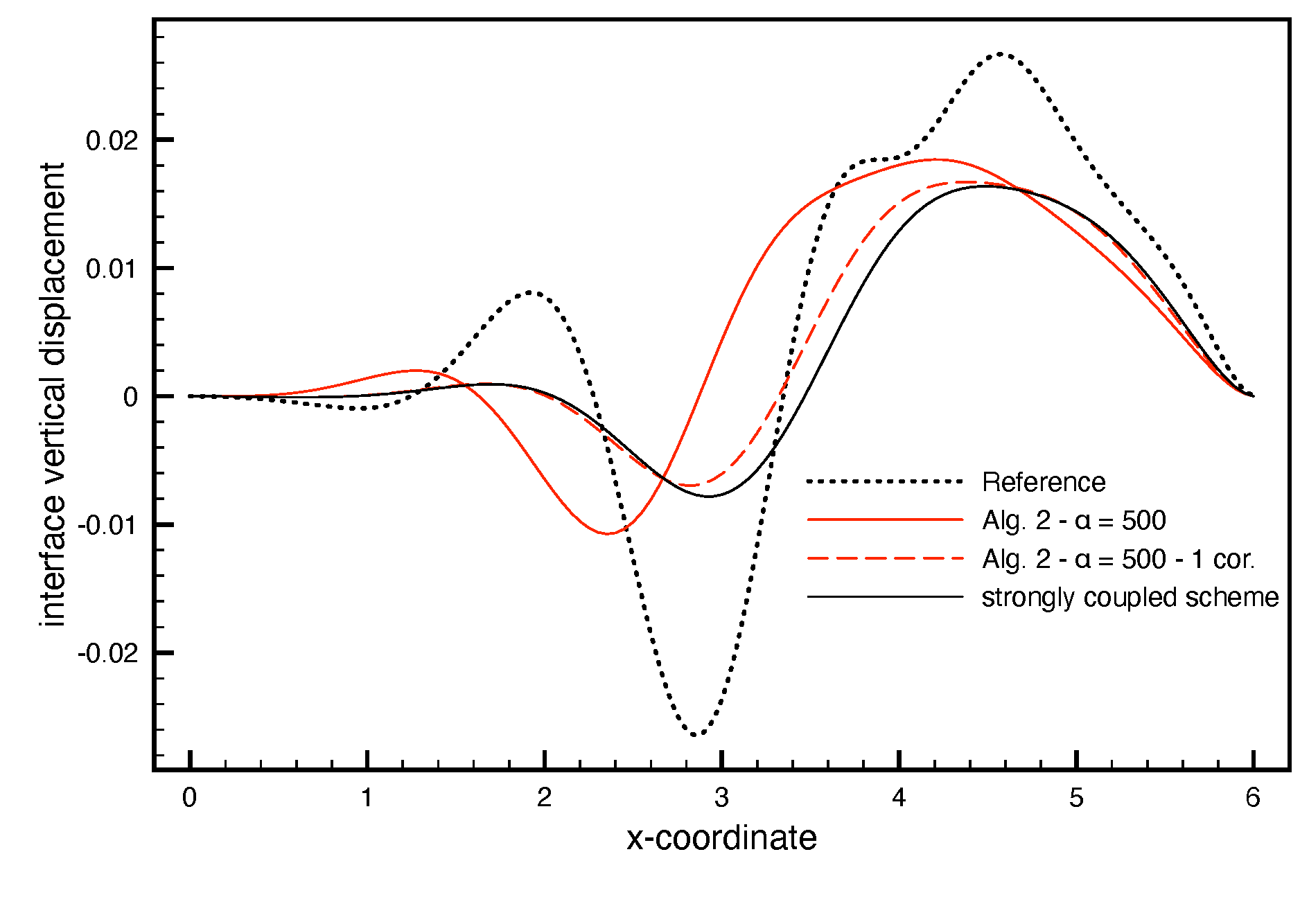} \label{fig:subfig3}}
\centering
\subfigure[$\Delta t = 1.25\cdot 10^{-4}$, $ h= 0.025$.]{\includegraphics[width=0.48\linewidth]{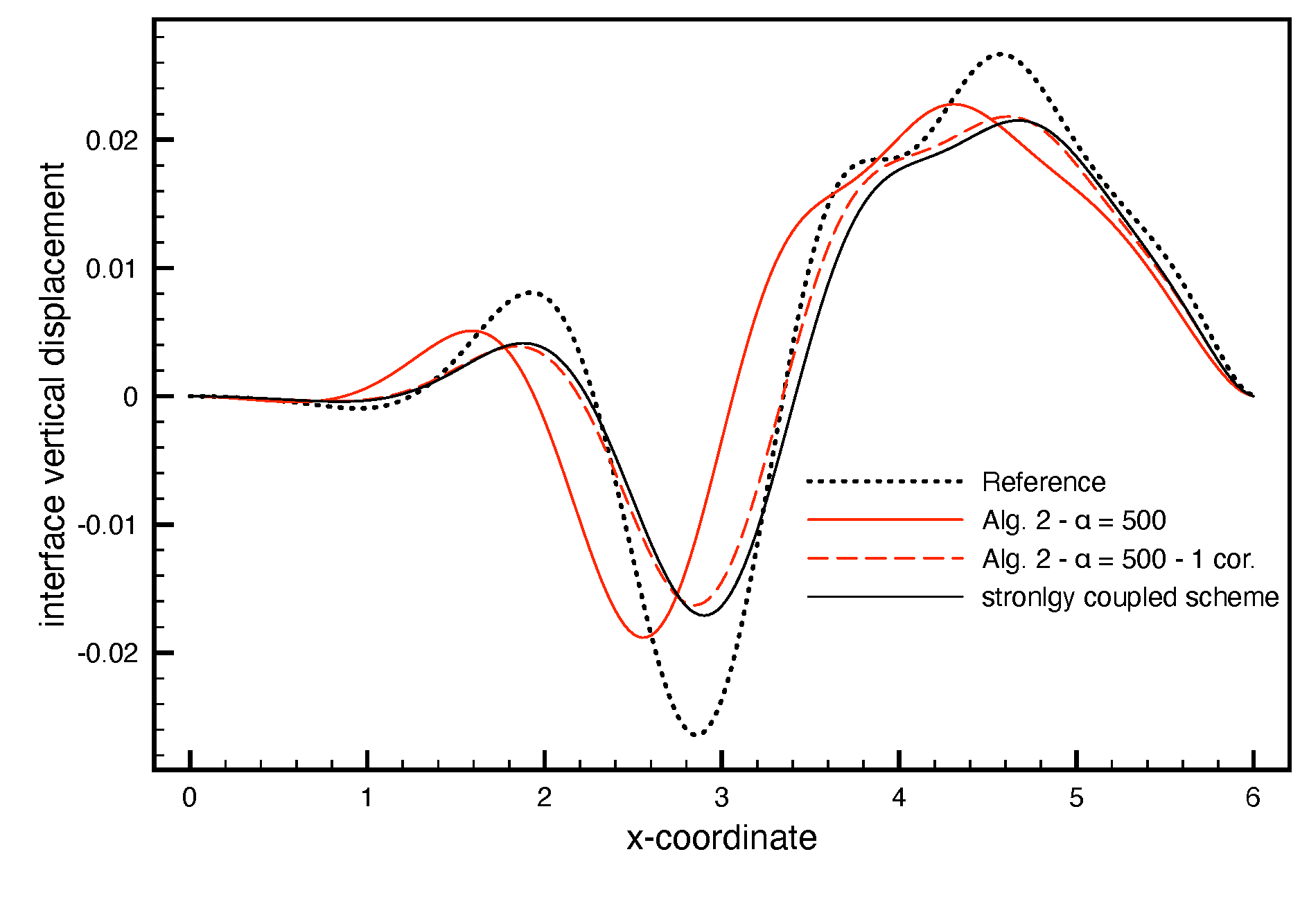}}\,\,\,
\subfigure[$\Delta t = 6.25\cdot 10^{-5}$, $ h= 0.0125$.]{\includegraphics[width=0.48\linewidth]{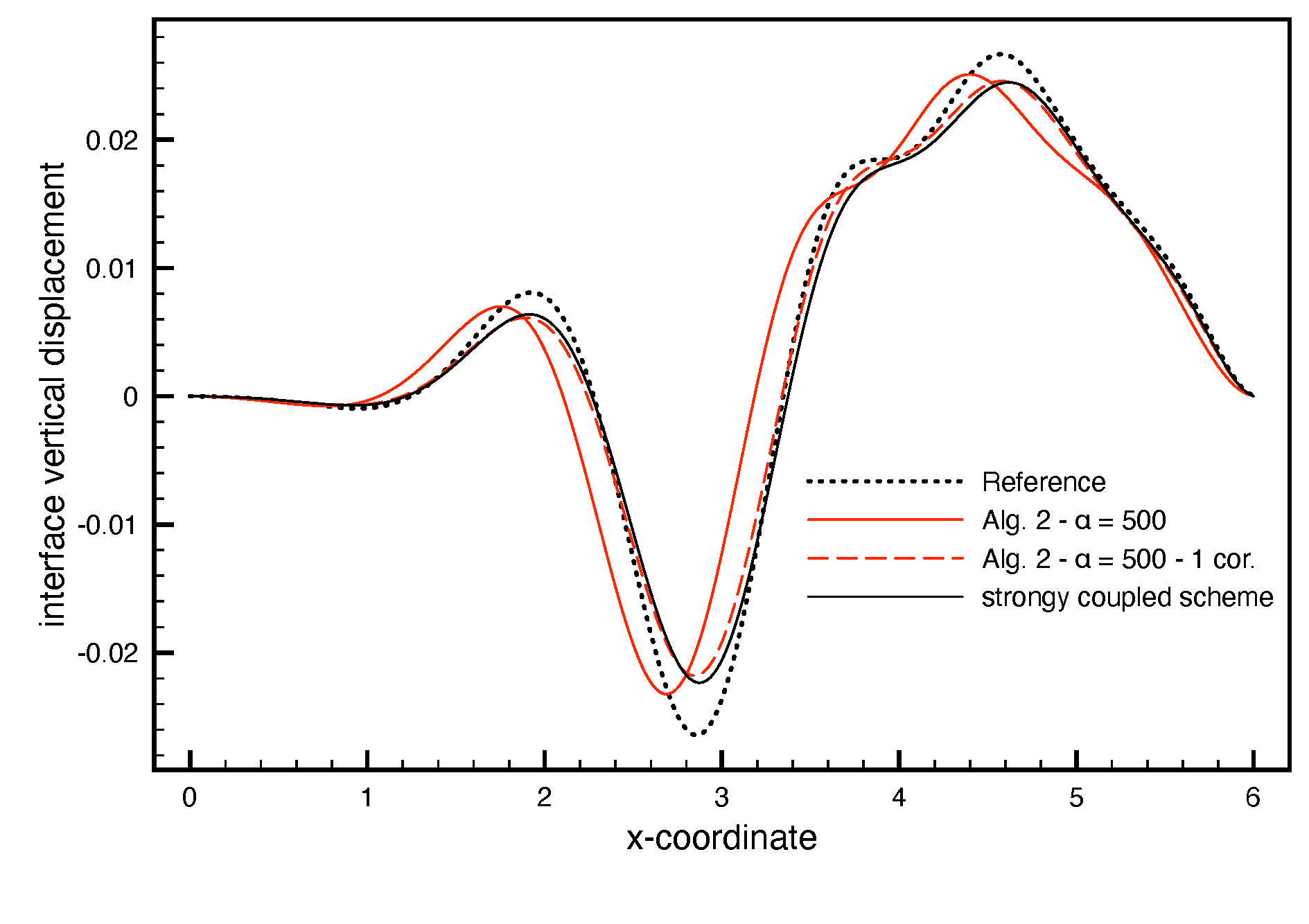} \label{fig:subfig2}}  
\caption{Comparison of the displacements at $t=0.015$ obtained for different levels of space-time refinement, $\Delta t = \mathcal O (h)$.}
\label{fig:disp}
\end{figure}

Another salient feature of Figure \ref{fig:err-time} is that it highlights the $h$-uniformity of the time-splitting error.
This is indeed one the key features of Algorithm~\ref{alg:fullRR} with respect to the genuine Robin-Robin explicit coupling scheme reported in \cite[Algorithm 4]{burman2014explicit}, in which 
$ \alpha = \gamma \mu/h$. The resulting splitting error  scales as $\mathcal O(\Delta t/h)$, and 
hence preventing convergence under 
$\Delta t = \mathcal O(h)$. Figure \ref{fig:comp-genuine} provides numerical evidence of this issue and shows  
Algorithm~\ref{alg:fullRR}  fixes it.

The $h$-uniformity of the splitting error has further implications in terms of  accuracy.  Indeed, owing 
to Theorem~\ref{mainthm2}, one correction iteration in Algorithm~\ref{alg:fullRR} should be
 enough to achieve overall $\mathcal O(h)$ accuracy, under $\Delta t = \mathcal O(h)$. 
Numerical evidence of this  is given in Figure \ref{fig:err-time-1c}. We can also notice 
that the convergence behavior is very close to the one provided by the strongly coupled scheme. 
This is a fundamental advantage of Algorithm~\ref{alg:fullRR} with respect to the genuine Robin-Robin explicit coupling scheme,
in which both high order extrapolation and several corrections are need to coped with the loss of $h$-uniformity (see \cite{burman2014explicit}).

The superior accuracy of the Algorithm~\ref{alg:fullRR} with one correction iteration  is also clearly visible
in Figure \ref{fig:disp}, where  the interface displacements associated to Figures \ref{fig:err-time} and 
 \ref{fig:err-time-1c} (first four points of each curve) are displayed. 
For comparison purposes, the reference displacement is also shown. 
Observe that the defect-correction variant of Algorithm~\ref{alg:fullRR} retrieves the accuracy of the strongly coupled  scheme.

\subsection{Impact of the Robin coefficient $\alpha$}
We now turn our attention to another fundamental question related to Algorithm~\ref{alg:fullRR}: 
the choice of the Robin parameter $\alpha$. From Theorem~\ref{mainthm2},  
the leading term of the time splitting error scales as 
$$\sqrt{\alpha^{-1} + \alpha}\sqrt{\Delta t}.$$ 
We can hence anticipate that  accuracy should  be spoiled for (relatively) large or small values of $\alpha$. 
Numerical evidence of this behavior is provided in Figures \ref{fig:err-time-alpha} and  \ref{fig:err-time-1c-alpha}, 
where the convergence histories obtained with Algorithm~\ref{alg:fullRR} (without and with correction) are reported 
for different values of $\alpha$. Indeed, the best accuracy is obtained for (relatively) 
moderate values of  $\alpha$, ranging from  250 to 2000, whereas out of this interval accuracy degrades rapidly. 
It should be noted that, since $\alpha$ is not dimensionless, these optimal values are expected to depend on the physical 
parameters of the system. 

\begin{figure}[h!]
\centering
\includegraphics[width=0.6\linewidth]{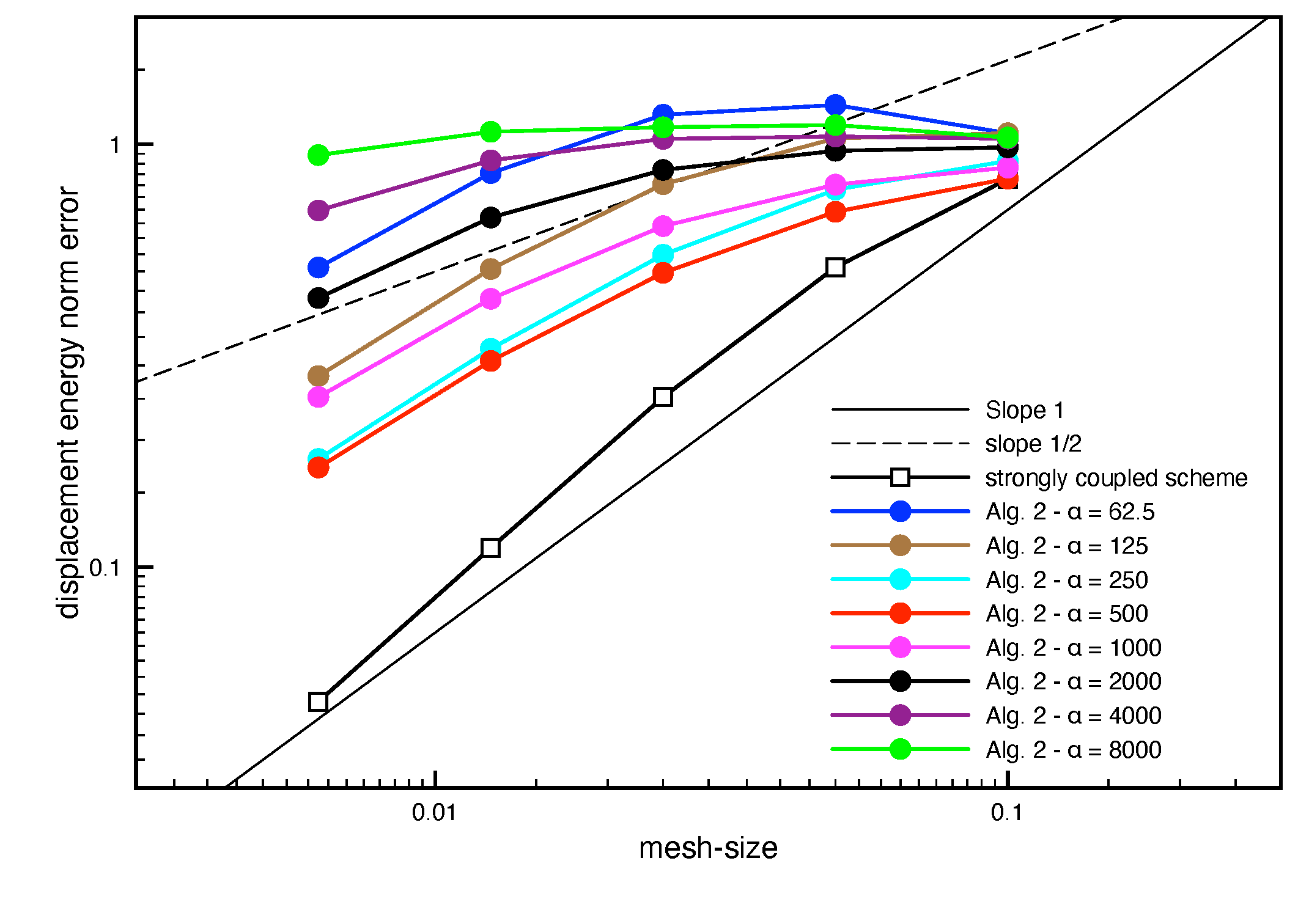} 
\caption{Time-convergence history of the displacement at $t=0.015$, with $\Delta t = \mathcal O (h)$ obtained with Algorithm~\ref{alg:fullRR}  for 
different values of $\alpha$.}
\label{fig:err-time-alpha}
\end{figure}
 \begin{figure}[h!]
	\centering
\includegraphics[width=0.6\linewidth]{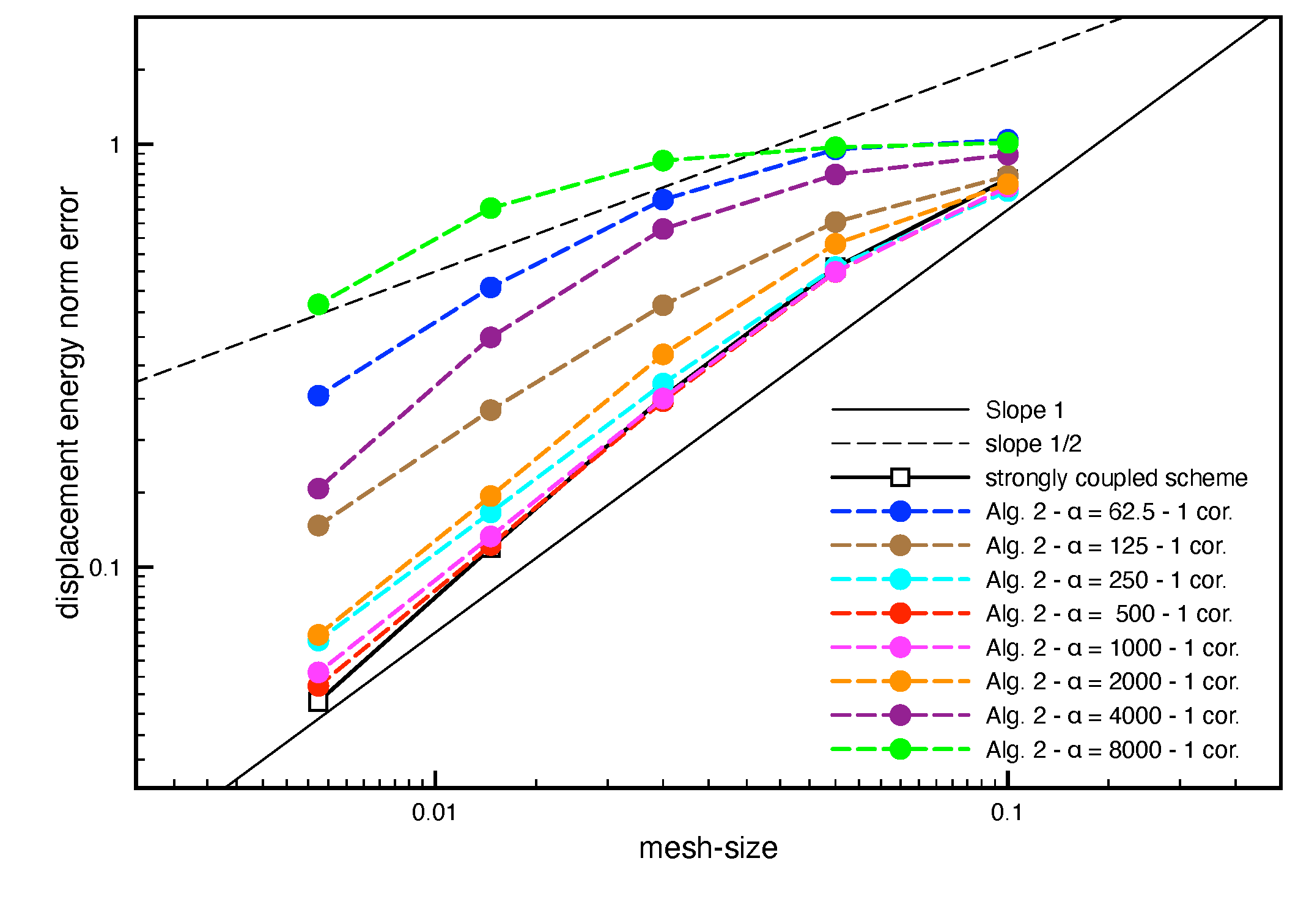} 
\caption{Time-convergence history of the displacement at $t=0.015$, with $\Delta t = \mathcal O (h)$ obtained with Algorithm~\ref{alg:fullRR} 
with 1 correction iteration for different values of $\alpha$.}
\label{fig:err-time-1c-alpha}
\end{figure}


\bibliographystyle{abbrv}
\bibliography{references}

\appendix
\section{Proof of Lemma \ref{ineqLemma}}

\begin{proof}

We begin this proof by establishing several identities. The first identity follows from Minkowski's integral inequality and Jensen's inequality. Namely, we note that for any $H^\ell$-norm $\|\cdot \|$, we have
\begin{equation}\label{normBound}
\|\frac{1}{\Dt}\int_{t_n}^{t_{n+1}} w(\cdot, s)ds \|^2 \leq \frac{1}{\Dt} \int_{t_n}^{t_{n+1}} \| w(\cdot,s)\|^{2} ds.
\end{equation}
Our remaining identities are straightforward to prove. Let $|r| \leq 2$ and define 
\begin{equation*}
\bar{w}^{n+1}(x):= \frac{1}{\Dt} \int_{t_{n}}^{t_{n+1}} w(x, s) \, ds.
\end{equation*}
We have
\begin{alignat}{1}
\partial_{x}^r (w^{n+1} - w^{n}) &= \int_{t_n}^{t_{n+1}} \partial_{x}^{r} (\pt w)(\cdot, s)ds, \label{id1} \\
\partial_{x}^r( \pdt w^{n+1} - \partial_{t}w^{n+1/2} ) & = \frac{1}{2\Dt} \int_{t_{n}}^{t_{n+1}}(t_{n+1}-s) (s-t_{n})\partial_{x}^r \partial_{t}^{3}w(\cdot,s)ds,\label{id2} \\
 \pdt w^{n+1}-\pt w^{n+1} &= \frac{-1}{\Dt} \int_{t_n}^{t_{n+1}} (s-t_n) \partial_t^2 w(\cdot, s) ds, \label{id3} \\
\partial_{x}^r(w^{n+1} - \bar{w}^{n+1}) &= \frac{1}{\Dt}\int_{t_n}^{t_{n+1}}(s-t_n)\partial_{x}^{r}( \pt w)(\cdot, s)ds, \label{id4} \\
\partial_{x}^r(w^{n+1/2} - \bar{w}^{n+1}) &= \frac{-1}{2\Dt}\int_{t_n}^{t_{n+1}}(t_{n+1} - 2s + t_n)\partial_{x}^{r}( \pt w)(\cdot, s)ds.\label{id5}
\end{alignat}

We may now proceed with the proof of Lemma \ref{ineqLemma}.


To prove \eqref{tt1}, we write $\pdt R_h^s \bq^{n+1}-\pt \bq^{n+1/2}= (R_h^s -I) \pdt \bq^{n+1}+( \pdt \bq^{n+1} - \partial_{t}\bq^{n+1/2} )$ and
\begin{equation*}
 \pdt \bq^{n+1}= \frac{1}{\Dt} \int_{t_{n}}^{t_{n+1}}  (\partial_t \bq)(\cdot, s) ds.
\end{equation*}
Hence, 
\begin{equation*}
(R_h^s - I) \pdt \bq^{n+1}=\frac{1}{\Dt} \int_{t_{n}}^{t_{n+1}}  (R_h^s-I)(\partial_t \bq)(\cdot, s) ds.
\end{equation*}
It therefore follows from \eqref{normBound} and \eqref{eq1} that
\begin{equation}\label{r51}
\|(R_h^s - I)  \pdt \bq^{n+1}\|_{L^2(\Omega_s)}^2 \le \frac{1}{\Dt} \int_{t_{n}}^{t_{n+1}}  \|(R_h^s - I)(\partial_t \bq)(\cdot, s)\|_{L^2(\Omega_s)}^2 ds \le \frac{C h^{4}}{\Dt} \int_{t_{n}}^{t_{n+1}}  \|\partial_t \bq(\cdot, s)\|_{H^2(\Omega_s)}^2 ds .
\end{equation}
To estimate $\pdt \bq^{n+1} - \partial_{t}\bq^{n+1/2}$, we apply H\"older's inequality to \eqref{id2} with $|r|=0$ to obtain
\begin{alignat*}{1}
|\pdt \bq^{n+1} - \partial_{t}\bq^{n+1/2}|\leq & \left(\frac{1}{4\Dt^{2}} \int_{t_{n}}^{t_{n+1}} (t_{n+1}-s)^{2}(s-t_{n})^{2}ds \right)^{1/2}\left( \int_{t_{n}}^{t_{n+1}}|\partial_{t}^{3}\bq(\cdot,s)|^2 ds\right)^{1/2} \\
=&\left( \frac{\Dt^3}{5!}\right)^{1/2}\left( \int_{t_{n}}^{t_{n+1}}|\partial_{t}^{3}\bq(\cdot,s)|^2 ds\right)^{1/2}.
\end{alignat*}
Therefore,
\begin{equation}\label{j64}
\|\pdt \bq^{n+1} - \partial_{t}\bq^{n+1/2}\|_{L^2(\Omega_s)}^2 \leq \frac{\Dt^3}{5!} \int_{t_{n}}^{t_{n+1}}\|\partial_{t}^{3}\bq(\cdot,s)\|_{L^2(\Omega_s)}^2ds.
\end{equation}


To get the estimate \eqref{tt2}, we recall that $\blam^{n+1} =\sigma_{f}(\bu^{n+1},p^{n+1})\bn$. Then, we use a trace inequality \eqref{trace} to get 
\begin{alignat*}{1}
 \|\blam^{n+1}-\blam^n\|_{L^2(\Sigma)}^2 &\le C\|\sigma_{f}(\bu^{n+1},p^{n+1})\bn-\sigma_{f}(\bu^{n},p^{n})\bn\|_{H^1(\Omega_f)}^2 \\
&\le C\bigg(\mu^2 \|\epsilon(\bu^{n+1} - \bu^{n})\|_{H^1(\Omega_f)}^2 + \|p^{n+1}-p^{n}\|_{H^1(\Omega_f)}^2 \bigg).
\end{alignat*}
Applying \eqref{normBound} and \eqref{id1}, we obtain our result.


The bound for \eqref{tt4} follows in the same manner with an additional application of \eqref{Stability}.

Similarly, \eqref{tt3} follows immediately from \eqref{id2} and \eqref{normBound} after applying the trace inequality \eqref{trace} and the stability result \eqref{Stability}.
\\


To get the bound \eqref{tt5} we write $\pdt R_h^f \bu^{n+1}- \pt \bu^{n+1}= (R_h^f-I) \pdt \bu^{n+1}+ \pdt \bu^{n+1}-\pt \bu^{n+1}$. Similar to the bound \eqref{r51}, we can show that 
\begin{equation*}
\|(R_h^f-I) \pdt \bu^{n+1}\|_{L^2(\Omega_f)}^2 \le  \frac{C h^{4}}{\Dt} \Big(\int_{t_{n}}^{t_{n+1}}  \|\partial_t \bu(\cdot, s)\|_{H^2(\Omega_f)}^2 \Big). 
\end{equation*}
Furthermore, applying \eqref{id4} and H\"{o}lder's inequality, we establish
 \begin{equation*}
\|\pdt \bu^{n+1}-\pt \bu^{n+1}\|_{L^2(\Omega_f)}^2 \le  \frac{\Dt}{3}  \int_{t_n}^{t_{n+1}} \|\partial_t^2 \bu(\cdot, s)\|_{L^2(\Omega_f)}^2 ds. 
 \end{equation*}
 Combining the above two inequalites gives  \eqref{tt5}. 
\\


 For \eqref{tt6}, we use \eqref{Stability} and write 
\begin{equation*}
\|\nabla R_h^s \bg_2^{n+1}\|_{L^2(\Omega_s)}^2 \leq C \|\bg_2^{n+1}\|_{H^1(\Omega_s)}^2.
\end{equation*}
The bound now follows exactly that of \eqref{tt3}.

In order to prove \eqref{tt7} we bound each term in $\bg_1^{n+1}$ separately. To bound the term $\alpha(\pdt R_h^s \bfeta^{n+1}- R_h^f \bu^{n+1})$ we use that $\bu^{n+1}= \pt \bfeta^{n+1}$ on $\Sigma$, indicating that $R_h^f \bu^{n+1} = R_h^s \pt \bfeta^{n+1}$. Therefore
 \begin{equation*}
\alpha(\pdt R_h^s \bfeta^{n+1}- R_h^f \bu^{n+1})=\alpha( \pdt R_h^s \bfeta^{n+1} - \pt R_h^s \bfeta^{n+1}).
\end{equation*}
Thus, applying the trace inequality \eqref{trace} and stability \eqref{Stability}, we have
\begin{equation*}
\alpha^{2}\|\pdt R_h^s \bfeta^{n+1} - \pt R_h^s \bfeta^{n+1} \|_{L^2(\Sigma)}^2 \leq C \alpha^2 \|\pdt \bfeta^{n+1} - \pt \bfeta^{n+1} \|_{L^2(t_n, t_{n+1}; H^1(\Omega_s))}^2.
\end{equation*}
Therefore, applying \eqref{id3} and using H\"{o}lder's inequality, we have
\begin{equation*}
\alpha^{2}\|\pdt R_h^s \bfeta^{n+1} - \pt R_h^s \bfeta^{n+1} \|_{L^2(\Sigma)}^2 \leq C \alpha^2 \Dt  \int_{t_n}^{t_{n+1}} \|\partial_t^2 \bfeta(\cdot, s)\|_{H^1(\Omega_s)}.
\end{equation*}
Combining this with \eqref{tt3} gives \eqref{tt7}.

Next, the proofs for \eqref{tt12} and \eqref{tt13}are nearly identical, so we only provide the proof of \eqref{tt12}.  Recall from \eqref{eq1} that
\begin{equation*}
\|S_h p^{n+1} - p^{n+1}\|_{L^{2}(\Omega_f)}^2 \le Ch^4 \|p^{n+1}\|_{H^{2}(\Omega_f)}^2.
\end{equation*}

We may then write $p^{n+1} = p^{n+1} - \bar{p}^{n+1} + \bar{p}^{n+1}$. Thus we have
\begin{equation*}
\|S_h p^{n+1} - p^{n+1}\|_{L^{2}(\Omega_f)}^2 \le Ch^4 \bigg(\|p^{n+1} - \bar{p}^{n+1}\|_{H^{2}(\Omega_f)}^2 + \|\bar{p}^{n+1}\|_{H^{2}(\Omega_f)}^2\bigg).
\end{equation*}
 Then, using \eqref{id4} \eqref{normBound}, along with H\"{o}lder's inequality, we have
\begin{alignat*}{1}
\|p^{n+1} -\bar{p}^{n+1}\|_{H^{2}(\Omega_f)}^2 \le& C\Dt \|\pt p\|_{L^{2}(t_n, t_{n+1} ;H^{2}(\Omega_f))}^2,\\
\|\bar{p}^{n+1}\|_{H^{2}(\Omega_f)}^2 \le& C\frac{1}{\Dt}\| p\|_{L^{2}(t_n, t_{n+1} ;H^{2}(\Omega_f))}^2.
\end{alignat*}
Our result then follows from combining the terms above.



For \eqref{tt11}, we use the stability result \eqref{Stability} to recognize that $\|\nabla S_h p^{n+1} \|_{L^2(\Omega_f)} \leq C \|p^{n+1}\|_{H^1(\Omega_f)}$. We may then follow the proof of \eqref{tt12} to write
\begin{equation*}
\|\nabla S_h p^{n+1} \|_{L^2(\Omega_f)}^2 \leq C\bigg( \|p^{n+1} - \bar{p}^{n+1}\|_{H^1(\Omega_f)}^2 + \| \bar{p}^{n+1}\|_{H^1(\Omega_f)}^2 \bigg).
\end{equation*}

Following the same process as \eqref{tt12}, this yeilds 
\begin{equation*}
\|\nabla S_h p^{n+1} \|_{L^2(\Omega_f)}^2 \leq C\bigg( \Dt \|\pt p\|_{L^{2}(t_n, t_{n+1} ;H^{1}(\Omega_f))}^2 +\frac{1}{\Dt}\| p\|_{L^{2}(t_n, t_{n+1} ;H^{1}(\Omega_f))}^2 \bigg).
\end{equation*}
In a similar fashion, we bound \eqref{tt16} by writing $\bu^{n+1} = \bu^{n+1} - \bar{\bu}^{n+1} + \bar{\bu}^{n+1}$. The result follows in the same manner as \eqref{tt11}.


To prove \eqref{tt9}, we follow the proof of \eqref{tt12}, however we apply \eqref{eq2} and \eqref{id5} in place of \eqref{eq1} and \eqref{id4}. Thus we have
\begin{equation*}
\|\nabla (R_h^s - I) \bfeta^{n+1/2}\|_{L^2(\Omega_s)}^2 \leq Ch^2 \|\bfeta^{n+1/2}\|_{H^2(\Omega_s)}^2.
\end{equation*}

Thus, noting $\bfeta^{n+1/2} = \bfeta^{n+1/2} - \bar{\bfeta}^{n+1} + \bar{\bfeta}^{n+1}$, we have

\begin{alignat*}{1}
 \|\bfeta^{n+1/2} -\bar{\bfeta}^{n+1}\|_{H^2(\Omega_s)}^2 \le& C\Dt \|\pt \bfeta\|_{L^{2}(t_n, t_{n+1} ;H^{2}(\Omega_s))}^2,\\
\|\bar{\bfeta}^{n+1}\|_{H^{2}(\Omega_f)}^2 \le& \frac{1}{\Dt}\|\bfeta\|_{L^{2}(t_n, t_{n+1} ;H^{2}(\Omega_s))}^2.
\end{alignat*}
We then combine terms.


Finally, for \eqref{tt15} we use \eqref{normBound} and \eqref{eq2}, to get
\begin{alignat*}{1}
\|\nabla(R_h^s -I)\pdt \bfeta^{n+1/2}\|_{L^2(\Omega_s)}^2 \leq& C \frac{h^2}{\Dt}\|\pt \bfeta\|_{L^2(t_{n},t_{n+1}; H^2(\Omega_s))}^2.
\end{alignat*}

\end{proof}

\end{document}